\documentclass[a4paper,12pt]{article}
\usepackage{amssymb}
\usepackage{amsthm}
\usepackage{latexsym,array}
\usepackage{amsthm}
\usepackage{amsmath, mathrsfs}
\usepackage[english]{babel}
\usepackage{color}
\usepackage[mathscr]{euscript}
\usepackage{hyperref}
\usepackage[left=2.5cm,right=2.5cm,top=2cm,bottom=2cm]{geometry}
\usepackage{cite}
\hypersetup{
    colorlinks=true,
    linkcolor=blue,
    filecolor=magenta,      
    urlcolor=blue,
    citecolor=red,
    }

\newtheorem{theorem}{Theorem}[section]
\newtheorem{definition}[theorem]{Definition}

\newtheorem{proposition}[theorem]{Proposition}

\newtheorem{remark}[theorem]{Remark}

\newcommand{\R}{\mathbb{R}}

\newcommand{\D}{\mathbb{D}}

\newcommand{\bk}{\mathbf{k}}
\newcommand{\be}{\mathbf{e}}

\newcommand{\sP}{\mathcal{P}}

\newcommand{\sG}{\mathfrak{G}}
\newcommand{\sB}{\mathscr{B}}
\newcommand{\sR}{\mathscr{R}}
\newcommand{\sJ}{\mathscr{J}}

\newcommand{\fLog} {\operatorname{Log}}%LogHyperbolic
\providecommand{\abs}[1]{\lvert#1\rvert}%absolutevalue
\providecommand{\norm}[1]{\lVert#1\rVert}%norm

\begin{document}
\title{Lesche stability of the Shannon strong hyperbolic entropy and some hyperbolic extensions}
\small{
\author {Juan Adrián Ramírez-Belman$^{(1)\footnote{corresponding author}}$, Juan Bory-Reyes$^{(2)}$,\\ José Oscar González-Cervantes$^{(3)}$, Gamaliel Yafte Tellez-Sanchez$^{(4)}$}
\vskip 1truecm
\date{\small $^{(1)}$ SEPI, ESFM-Zacatenco-Instituto Polit\'ecnico Nacional. 07338, Ciudad M\'exico, M\'exico\\ Email: adrianrmzb@gmail.com \\$^{(2)}$ {SEPI, ESIME-Zacatenco-Instituto Polit\'ecnico Nacional. 07338, Ciudad M\'exico, M\'exico}\\Email: juanboryreyes@yahoo.com \\
$^{(3,4)}$ {Departamento de Matemáticas, ESFM-Instituto Polit\'ecnico Nacional. 07338, Ciudad M\'exico, M\'exico}\\Email: jogc200678@gmail.com, gtellez.wolf@gmail.com \\
}}
\maketitle
\begin{abstract}
In recent decades, several definitions of new entropy measures have been proposed, which expands the range of applications for this important tool. The present work focuses on the extension of the classical Shannon entropy to the hyperbolic number plane $\D$ with the notion of valued hyperbolic probability. It is shown that the Shannon strong hyperbolic entropy over a discrete hyperbolic probability distribution $(\rho_{1}, \ldots,\rho_{N})$ can be established by the action of the hyperbolic derivative on the generating function $\sum_{s=1}^{N}{\rho}_{s}^{-\xi}$ with respect to the hyperbolic variable $\xi$ and then we tend $\xi$ to $-1_{\D}$. Furthermore, we prove that this hyperbolic extension possesses the Lesche stability property, also known as experimental robustness. Finally, we present some results on the hyperbolic extension of the Rényi entropy and hyperbolic extropy.
\vspace{0.5cm}

\noindent
%\textbf{Palabras Claves:} Entropía de Shannon, Entropía de Rényi, números hiperbólicos, derivada hiperbólica, estabilidad de Lesche.\\
\textbf{Keywords:} Shannon entropy, Rényi entropy, hyperbolic numbers, hyperbolic derivative, Lesche stability. \\
\textbf{MSC (2010):} 94A17, 30G35, 32A30, 32A10
\end{abstract}

\section{Introduction}

The term entropy comes from the Greek word {\em tropy} and means transformation or evolution. It was  Clausius, R., who named and developed the concept, see \cite{CR}. Later, Boltzmann, L., in his research \cite{BMa}, found a way to express this concept mathematically from the point of view of probability. Between 1890 and 1900, Boltzmann and other researchers developed the foundations of statistical physics, a theory closely related to the notion of entropy.

The Shannon entropy, as introduced by Shannon, C. E., originates in the field of information theory, see \cite{Sha48}. It translates to the degree of disorder in a system, a measure of uncertainty or randomness in a system or data set. It is a tool used in various areas of knowledge, such as information theory, cryptography, and data analysis, since its nature describes a quantification of the information necessary to describe an event within a set of possibilities.

Subsequently, extensions to the original work of Shannon emerged, resulting in several alternative entropy measures. Such is the case of Rényi, A., whose work in \cite{Ren61} by relaxing the Shannon third property (the additive property) and extended the Shannon entropy to a continuous parametric family of entropy measures. Examples of literature reviews on generalizations of entropy measures are \cite{ABH, LT}.

Recently, complex-valued functions have been introduced into the theory of uncertain information modeling. Ramot, D., along with Milo, R., Friedman, M., and Kandel, A., published a paper on complex fuzzy sets and fuzzy correlation terms, research conducted in \cite{RMFK}. In \cite{AUS}, Alkouri, A. U. M., and Salleh, A. R., propose intuitionistic complex-valued fuzzy sets, where the distance measure between complex-valued fuzzy sets is analyzed. Furthermore, theories related to complex-valued functions based on the mass function \cite{PLD, XFD, XFD1} have been proposed. Additionally, information carriers containing complex-valued signals include, for example, audio signals, image signals, and physiological signals, as discussed in the research of \cite{DSM, TJM,WLH}.

%En \cite{VNJ} se aborda la mecánica cuántica y se define la probabilidad complejo valuada, después se introduce en la teoría de decisión cuántica que describe la información, conocida también como probabilidad cuántica. El psicólogo matemático Busemeyer, J. R., cuya investigación se centra en los procesos cognitivos, la dinámica del juicio y la toma de decisiones humanas basado en modelos matemáticos. Bruza, P. D., investigador en el área de la cognición cuántica, estudia la cognición humana mediante un marco conceptual y estructuras matemáticas de la física cuántica. En \cite{BuB}, definen una probabilidad de valor complejo  al modelar y manejar incertidumbre de la información.

Uncertainty is unavoidable in decision-making systems applications. Therefore, researchers set out to measure and model uncertainty information using various tools described in \cite{FCP, YAE, LPD, XXW, DJi, LBU, JCD, FFL}. In decision-making systems, measuring uncertain information is an open topic, especially in information processing modeled on complex planes. In \cite{XYU}, Xiao, F., and Yue, X. G., define a complex entropy to measure the uncertainty of a complex probability distribution.

A fractional Tsallis entropy over a complex probability distribution is set at \cite{IRD}. Complex-valued neural networks are networks that are based on variables and parameters of complex values.

In \cite{GB21}, two extensions of the Shannon entropy are introduced via the hyperbolic logarithm over valued hyperbolic probability distributions: weak hyperbolic entropy and strong hyperbolic entropy.

Hyperbolic numbers, whose origin dates back to Cockle, J., who in \cite{C} devised his tessarines, can be considered a hybrid between real and complex numbers. In other words, they possess a nature similar to real numbers and exhibit properties related to complex numbers. For our purposes, we will use analogous notations or terminologies from \cite{Syk95, Syk13, GB21,ViVe}.

Hyperbolic numbers have applications in modeling genetic phenomena because they exhibit structural analogies to the molecular coding system and can be represented by bisymmetric matrices, which in special cases are doubly stochastic matrices. These matrices have various applications in linear programming, optimization, game theory, and algebraic biology \cite{PK}. Hyperbolic numbers are also related to two-dimensional spacetime geometry; in \cite{YAG}, they are used to describe the Lorentz magnitudes of special relativity in a spacetime plane.

In recent years, complex neural networks have had different areas of specialization and applications, such as the study by Akira, H., see \cite{AKi}. Hyperbolic neural networks were proposed by Buchholz, S., and Sommer, G., in \cite{BS} as an extension of real neural networks valued towards two dimensions. In \cite{NK}, Nitta, T., and Kuroe, Y., present a hyperbolic gradient operator and hyperbolic backpropagation learning algorithms. Furthermore, the hyperbolic environment has been used to represent solutions to three-dimensional equations and coordinates in special relativity, as mentioned in \cite{Syk95}.

Lesche, B., based in Brazil, demonstrated in \cite{Le} an interesting property about the Shannon entropy, which he called {\em stability}. However, in a private conversation, Tsallis, C., suggested to Lesche that he use the term {\em experimental robustness} instead of {\em stability} about the property he had demonstrated with respect to Shannon entropy. He agreed that this term better reflected the physical content of the mathematical property he himself had introduced. Furthermore, the expression {\em stability} could be confused with {\em thermodynamic stability}, which relates to the concavity of entropy, a completely different concept and an independent property. Therefore, the terms {\em Lesche stability} and {\em experimental robustness} are currently used interchangeably in the specialized literature, see \cite{TB}.

\section{Preliminaries} 
\subsection{The Shannon entropy}
Entropy can be understood in two ways: as a measure of uncertainty or as the amount of information needed to limit, eliminate, or reduce uncertainty.

\begin{definition}
If each of the $N$ states of a system $X$, denoted by $X_{s}, s=1,\ldots,N$ has a probability $p_s$. Then the {Shannon entropy} denoted by $S$, is written as:
\begin{equation}\label{Eq:E_S}
    S(p_1,\ldots, p_N)=-\sum_{s=1}^{N}p_{s}\log p_{s},
\end{equation}
where “ $\log$” denotes the natural logarithm.
\end{definition}
For a uniform probability distribution, we have $p_s = N^{-1}, N \in {\mathbb N}$, and the Shannon entropy reaches its maximum value $S_{max}:=S(\displaystyle\frac{1}{N},\ldots, \displaystyle\frac{1}{N}) = \log N$, resulting in the Boltzmann formula, except for a multiplicative factor, which is called the Boltzmann constant \cite{BMa}. In \cite{Sha48}, an axiomatization was established so that a function that satisfies the following three properties is called a {measure or function of entropy}.
\begin{enumerate}
    \item $S$ is continuous in each $p_{s}$.
    \item If all $p_{s}$ are equal, then $S$ is a monotonically increasing function.
    \item Given $Y$ another set of probabilities, then $S(XY)=S(X)+S(Y)$.
\end{enumerate}
%En \cite{Ren61} se demostró que la función logaritmo natural satisface los axiomas de entropía, de acuerdo al siguiente Lema.
%\begin{lemma}\label{Lm:AF_R}{\em \cite{E6}}.
 %   Sea $f:\N \to \R$ una función aditiva, esto es
  %  \begin{equation*}
        %f(nm)=f(n)+f(m),
   % \end{equation*}
    %para cualquier $n,m\in \N$, y
    %\begin{equation*}
     %   \lim_{n\to \infty}(f(n+1)-f(n))=0.
    %\end{equation*}
%Entonces, tenemos
 %   \begin{equation*}
  %      f(n)=c\log n,
   % \end{equation*}
%donde $c$ es una constante.
%\end{lemma}
The process of refining the axiomatics of the concept of entropy, by providing foundations for information theory, was considered by Shannon himself, where he obtained an important set of results detailing how the Shannon standard information measure is unsatisfactory for determining some aspects of shared information dependence.

\vspace{2mm}

On the other hand, after many years of extensive theoretical and applied research on the conception and application of entropy in countless fields of science, computer science, and engineering, it may be surprising that the entropy measure of a probability distribution has a dual (exterior-related) uncertainty measure, a complementary companion called “extropy”, as noted by \cite{LSA1, LSA2}, which has been defined as:
\begin{equation}\label{Eq:E_J}
    J(p_1,\ldots, p_N)=-\sum_{s=1}^{N}(1-p_{s})\log (1-p_{s}).
\end{equation}
As with entropy, the probability distribution that maximizes extropy is a uniform distribution, so that
\begin{equation}
J_{max}:=J(\displaystyle\frac{1}{N},\ldots, \displaystyle\frac{1}{N})=(N-1)\log\displaystyle\frac{N}{N-1}.
\end{equation}
Formally, the source of the duality of these two measures of uncertainty is based on the equations
\begin{equation}
J(p_1,\ldots, p_N) = \sum_{s=1}^{N}S(p_s;1-p_s) - S(p_1,\ldots, p_N)
\end{equation}
and symmetrically
\begin{equation}
S(p_1,\ldots, p_N) = \sum_{s=1}^{N}J(p_s;1-p_s) - J(p_1,\ldots, p_N),
\end{equation}
where
\begin{align*}
    S(p_s;1-p_s)=J(p_s;1-p_s)=-p_s \log p_s - (1 - p_s) \log(1 - p_s).
\end{align*}
Elementary properties that relate the Shannon concepts of entropy and extropy, detailed in \cite[Appendix A]{LSA1}, are summarized below.
\begin{proposition}\label{Prop:Relacion ExtyEnt}
Let $X$ be a discrete random variable with support $\{X_1,\ldots, X_N\}$ and with probability mass function (distribution) $\{p_1,\ldots, p_N\}$. If $N = 2$, so $X$ contains only two states, then $S((p_1,\ldots, p_N)) = J((p_1,\ldots, p_N))$, but while $X$ contains three or more states; $N \geq 3$ then $S((p_1,\ldots, p_N)) \geq J((p_1,\ldots, p_N))$.
\end{proposition}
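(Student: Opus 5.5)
The case $N=2$ follows directly from the definitions. If $p_2=1-p_1$, then \eqref{Eq:E_J} becomes
\[
J(p_1,p_2)=-(1-p_1)\log(1-p_1)-(1-p_2)\log(1-p_2)=-p_2\log p_2-p_1\log p_1=S(p_1,p_2),
\]
since the two terms of $J$ are just the two terms of $S$ with their indices swapped. So the real content is the inequality for $N\ge 3$, and I would prove it termwise, without using the duality identities.

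For $N\ge3$, set $h(p)=-p\log p+(1-p)\log(1-p)$ on $[0,1]$, with the usual convention $0\log 0=0$. Then
\[
S-J=\sum_{s=1}^N h(p_s).
\]
The key observation is that $h(p)\ge 0$ for $p\in[0,1/2]$. To see this, write $h(p)=\phi(p)-\phi(1-p)$ with $\phi(x)=-x\log x$. One has $h(0)=h(1/2)=0$. Moreover $h''(p)=-1/p+1/(1-p)$, which is negative on $(0,1/2)$, so $h$ is concave there. A concave function that vanishes at both endpoints of an interval is nonnegative on it, so $h\ge0$ on $[0,1/2]$.

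The main obstacle is that one of the $p_s$ may exceed $1/2$, and then $h(p_s)<0$. At most one index can have this property; call it $s=1$ and write $p_1=1-q$ with $q<1/2$, so that $\sum_{s\ge2}p_s=q$. By the symmetry $h(1-q)=-h(q)$, the required inequality becomes
\[
\sum_{s\ge2}h(p_s)\ge h(q)=h\Bigl(\sum_{s\ge 2}p_s\Bigr).
\]
This is a superadditivity statement for $h$ on $[0,1/2]$. I would derive it from the fact that $h$ is concave on $[0,1/2]$ with $h(0)=0$, since such a function satisfies $h(a)+h(b)\ge h(a+b)$ whenever $a+b\le 1/2$. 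Iterating this bound over the indices $s\ge2$ gives the claim.

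A cleaner alternative is to check superadditivity separately for each piece of $h$. The function $\phi$ is subadditive, not superadditive, so the check has to be done on the combination $h$ rather than on $\phi$ alone. I would therefore verify the two-term inequality $h(a)+h(b)\ge h(a+b)$ directly. For fixed $a+b=c$, the left-hand side is concave in $a$, so its minimum over the segment occurs at an endpoint $a\in\{0,c\}$, where it equals $h(c)$ because $h(0)=0$. This gives the two-term inequality, and induction on the number of terms finishes the proof. Equality holds in degenerate cases, for instance when all but two of the $p_s$ vanish.
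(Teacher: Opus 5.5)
Your proof is correct. The paper gives no argument of its own for this proposition; it quotes it as a known fact from \cite[Appendix A]{LSA1}. The duality identities displayed just before it are only rearrangements of the definitions (they amount to $\sum_s S(p_s;1-p_s)=S+J$), so they could not yield the inequality in any case. Your proof therefore supplies a self-contained argument where the paper relies on a citation. Its ingredients are all sound:
\begin{itemize}
\item the termwise identity $S-J=\sum_s h(p_s)$;
\item the antisymmetry $h(1-p)=-h(p)$;
\item nonnegativity of $h$ on $[0,1/2]$, from concavity and $h(0)=h(1/2)=0$;
\item the observation that at most one $p_s$ can exceed $1/2$;
\item the two-term inequality $h(a)+h(b)\ge h(a+b)$ for $a+b\le 1/2$, which you may iterate because every partial sum of $p_2,p_3,\ldots$ stays at most $q<1/2$.
\end{itemize}
Beyond the bare statement, your route shows that $S\ge J$ holds for every $N$; for $N\le 2$ equality is simply forced. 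Using strict concavity of $h$ on $[0,1/2]$, it also shows that equality holds exactly when at most two of the $p_s$ are nonzero.

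There are two slips of wording, neither affecting validity. First, $h(a)+h(b)\ge h(a+b)$ is \emph{sub}additivity in the usual terminology, not superadditivity. Second, your remark about $\phi$ is backwards. The function $\phi(x)=-x\log x$ is concave with $\phi(0)=0$, so it satisfies exactly the inequality $\phi(a)+\phi(b)\ge\phi(a+b)$ that you need. The piece of $h$ that blocks a termwise check is $(1-p)\log(1-p)$: it is convex and vanishes at $p=0$, so it satisfies the reverse inequality. That is the real reason you must treat $h$ as a whole, and your endpoint argument for the concave function $a\mapsto h(a)+h(c-a)$ does this correctly.
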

In \cite{A1} the Shannon entropy formula was rewritten using the antiderivative of the logarithm function, so that the Shannon entropy \eqref{Eq:E_S} can be written as:
\begin{equation}\label{Eq:E_Slim}
    \begin{split}
    S(p_1,\ldots, p_N)= \lim_{t\to -1}\frac{d}{dt}\left(\sum_{s=1}^{N}p_{s}^{-t}\right)
		    =\lim_{t\to -1}-\sum_{s=1}^{N}p_{s}^{-t}\log p_{s}=-\sum_{s=1}^{N}p_{s}\log p_{s},
    \end{split}
\end{equation}
where $\displaystyle\sum_{s=1}^{N}p_{s}^{-t}$ it is called a generating function.

\subsection{The Rényi Entropy and Lesche Stability}
This section refers to Lesche, B., who proposed a stability criterion for the Rényi entropy, which is unstable for any value of the parameter $q$ except for $q=1$, see \cite{Le}.
\begin{definition}
    Given $q \in \R$ such that $q>0$, the Rényi entropy or order $q$ is defined as 
    \begin{equation}\label{Eq:R_E}
        R_{q}(\sP)=\frac{1}{1-q}\log \left(\sum_{s=1}^{N}p_{s}^{q}\right), \quad q\neq 1,
    \end{equation}
where $\sP=(p_{1},\ldots,p_{N})$ it is a discrete probability distribution, see \cite{RW4}.
\end{definition}
\begin{remark}
If the order of Rényi entropy tends to one, it coincides with the Shannon entropy. On the other hand, Rényi entropy reaches its maximum value when equiprobability is reached $p_{s}=\displaystyle\frac{1}{N}, s=1,\ldots,N$: $R_{q, max}=\log N$. 
\end{remark}
Two special cases of Rényi entropy are the following:
\begin{enumerate}
    \item The Hartley entropy, or Hartley function, is a measure of uncertainty introduced by the American engineer and pioneering researcher in the field of electronics, Hartley, R., in 1928. Which states that for $q=0$:
\begin{equation}\label{Eq:E_Hart}
    R_{0}(\sP)=\frac{1}{1-0}\log \sum_{s=1}^{N} p_{s}^{0}=\log N.
\end{equation}
The above is a primitive construction, since, as Kolmogorov and Rényi point out, the Hartley function can be defined without introducing any notion of probability.
\item The {Collision entropy} is defined as the Rényi entropy for $q=2$, and is given by
\begin{equation}\label{Eq:E_Col}
    R_{2}(\sP)=\frac{1}{1-2}\log \sum_{s=1}^{N}p_{s}^{2}=-\log \sum_{s=1}^{N} p_{s}^{2}.
\end{equation}
\end{enumerate}
In \cite{Le} it is shown that the experimental effort required to distinguish between two probability distributions $\sP=(p_{1},\ldots,p_{N})$ and $\sP'=(p_{1}',\ldots,p_{N}')$ with appreciable accuracy is related to the metric
\begin{equation}\label{Eq:Le_norm_real}
    \norm{\sP-\sP'}:=\sum_{s=1}^{N}\abs{p_{s}-p_{s}'}.
\end{equation}

\begin{definition} [Lesche Stability]\label{Def:L_Sta}
Let $\sP,\sP'$ be two probability distributions. The Lesche stability of the Rényi entropy states that for any $\varepsilon>0$ there exists a $\delta>0$ such that
\begin{equation}\label{Eq:L_Sta}
        \norm{\sP-\sP'}\leq \delta \Rightarrow \frac{\abs{R_{q}(\sP)-R_{q}(\sP')}}{R_{q,\max}}<\varepsilon.
\end{equation}
\end{definition}
The main motivation for this type of stability is to check whether there is an observable change that can be detected each time the probability distribution $\sP$ in a set of microstates is perturbed by an infinitesimal amount. This criterion has already been applied in \cite{A2, A3, AKS, Fe, KS, WS, Oi, TB} for some generalizations of Shannon entropy. The Lesche stability of Shannon entropy can be formulated as follows:
\begin{definition}
Let $S:([0,1]^{N},\norm{\cdot})\longrightarrow (\R,\abs{\cdot})$ the Shannon entropy function. Given two probability distributions $\sP, \sP'\in [0,1]^{N}$, it holds that
\begin{equation}\label{Eq:Est_Shan}
    \norm{\sP-\sP'}<\delta \Rightarrow \frac{\abs{S(\sP)-S(\sP')}}{\log N}<\varepsilon.
\end{equation}
\end{definition}

\subsection{Hyperbolic numbers}
Let us recall some basic facts about hyperbolic numbers, which can be found in more detail in \cite{EMDA, Syk95, Syk13, GB21, ViVe}. 

The set of hyperbolic numbers is defined by
\begin{align*}
\D:=\R[\bk]=\{a+\bk b: a,b\in \R,  \bk \not \in \R; \bk^2=1\},
\end{align*}
and form a commutative ring with zero divisors. Two nontrivial idempotent elements are given by
\begin{align*}
\be_{1}=\frac{1}{2}(1+\bk), \quad \be_{2}=\frac{1}{2}(1-\bk),
\end{align*} 
and are such that:
\begin{align*}
(\be_{1})^{2}=\be_{1},\quad
(\be_{2})^{2}=\be_{2}.
\end{align*}
Both elements cancel each other out
\begin{align*}
\be_{1}\cdot \be_{2}&=0.
\end{align*}
The set of divisors of zero is denoted as $\sG=\mathbb{R}\be_{1}\cup \mathbb{R}\be_{2}$ and $\sG_{0}=\sG\cup \{0\}$.
\begin{proposition}
The correspondence
\begin{equation}\label{Eq:Emb_RtoD}
    x\to x\be_{1}+x\be_{2}=:\widetilde{x}.
\end{equation}
embedded $\R$ into $\D$. In particular, the hyperbolic number one, denoted by $1_{\D}$, is given by
\begin{equation*}
    1_{\D}=1\be_{1}+1\be_{2},
\end{equation*}
\end{proposition}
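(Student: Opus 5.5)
The statement to verify is that the map $x\mapsto \widetilde{x}=x\be_{1}+x\be_{2}$ is an embedding of $\R$ into $\D$, and that $1_{\D}=1\be_{1}+1\be_{2}$. I would first record the two identities that carry the whole argument:
\begin{equation*}
\be_{1}+\be_{2}=\tfrac12(1+\bk)+\tfrac12(1-\bk)=1,
\qquad
\be_{1}\be_{2}=\tfrac14(1-\bk^{2})=0 .
\end{equation*}
The idempotency $\be_j^2=\be_j$ is already recorded in the preliminaries.

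From the first identity, $\widetilde{x}=x(\be_{1}+\be_{2})=x=x+\bk\cdot 0$. So the correspondence is just the canonical inclusion $a\mapsto a+\bk 0$, written in the idempotent basis.

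The key steps, in order, are:
\begin{itemize}
\item \emph{Injectivity.} $\{\be_{1},\be_{2}\}$ is a real basis of $\D$: the change of basis from $\{1,\bk\}$ has determinant $-\tfrac12\neq 0$, and $\bk\notin\R$ makes $\{1,\bk\}$ independent. Hence $\widetilde{x}=\widetilde{y}$ forces equal coefficients, so $x=y$.
\item \emph{Additivity.} $\widetilde{x+y}=\widetilde{x}+\widetilde{y}$ follows by distributivity.
\item \emph{Multiplicativity.} Expand $\widetilde{x}\,\widetilde{y}$. The cross terms vanish because $\be_{1}\be_{2}=0$, and idempotency gives $xy\be_{1}+xy\be_{2}=\widetilde{xy}$.
\item \emph{Unit.} $\widetilde{1}=\be_{1}+\be_{2}=1$, which is exactly the claimed expression for $1_{\D}$. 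Thus the map is a unital injective ring homomorphism.
\end{itemize}

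I would close by noting that the image is the real line $\{a+\bk 0\}$, so the embedding identifies $\R$ with its canonical copy inside $\D$.

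None of these steps is a real obstacle. The only point needing care is the linear independence of $\be_{1},\be_{2}$, and that rests on $\bk\notin\R$, which is part of the definition of $\D$.
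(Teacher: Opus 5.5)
Your proof is correct and complete. The paper gives no argument for this proposition: it is recalled in the preliminaries as a standard fact from the cited references. So your verification supplies an argument the paper omits, rather than reproducing or departing from one.

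Reducing everything to the identities $\be_{1}+\be_{2}=1$ and $\be_{1}\be_{2}=0$ is the natural route, and it gives more than the statement literally says. It shows that $\widetilde{x}$ is just $x+\bk 0$, so the correspondence is the canonical inclusion and a unital injective ring homomorphism. It also shows that $1_{\D}=1\be_{1}+1\be_{2}$ is the multiplicative identity of $\D$. That is what justifies the paper's later componentwise handling of quotients such as $1_{\D}/(1_{\D}-\alpha)$.

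To cover how the embedding is actually used in the paper, you could add a few equally short observations:
\begin{itemize}
\item It respects the order and the modulus: $x\le y$ iff $\widetilde{x}\preceq\widetilde{y}$, $x<y$ iff $\widetilde{x}\prec\widetilde{y}$, and $\abs{\widetilde{x}}_{\bk}=\widetilde{\abs{x}}$.
\item For $x\neq 0$, $\widetilde{x}\notin\sG_{0}$, so $\widetilde{x}$ is invertible with $\widetilde{x}^{-1}=\widetilde{x^{-1}}$.
\item For $x>0$, $\fLog_{\D}\widetilde{x}=\widetilde{\log x}$.
\end{itemize}
The division by $\widetilde{\log}N$ in the Lesche stability theorem relies on these facts.
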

the above is known as {\em idempotent representation}.

\noindent Hyperbolic numbers are equipped with a partial order relation $\preceq$ and defined as follows:
\begin{equation*}
    \alpha\preceq \beta \quad \text{if only if} \quad a_{1}\leq b_{1} \quad \text{and} \quad a_{2}\leq b_{2},
\end{equation*}
where $\alpha=a_{1}\be_{1}+a_{2}\be_{2}$ y $\beta=b_{1}\be_{1}+b_{2}\be_{2}$.

%It can be directly verified that this relation is reflexive, transitive, and antisymmetric, thus representing a good generalization for the total order in the real numbers.

The strict ordering for hyperbolic numbers is defined by the rule
\[ \alpha \prec \beta \quad \text{if only if} \quad a_1 < b_1 \quad \text{and}\quad a_2 < b_2.\]
Given $\alpha,\beta \in \D$ such that $\alpha\preceq \beta$ and $\beta - \alpha \not \in \sG$, the closed hyperbolic interval $[\alpha,\beta]_{\D}$ relative to the partial order $\preceq$ can be defined as the set
\begin{equation*}
    [\alpha,\beta]_{\D}:=\{\xi\in \D:\alpha\preceq \xi\preceq \beta\}.
\end{equation*}
The open hyperbolic interval $(\alpha,\beta)_{\D}$ relative to the partial order $\preceq$ can be defined as the set
\begin{equation*}
    (\alpha,\beta)_{\D}:=\{\xi\in \D:\alpha\prec \xi\prec \beta\}.
\end{equation*}
The set of positive hyperbolic numbers is defined by
\begin{equation*}
    \D^{+}:=\{\xi\in \D: 0 \prec \xi\}.
\end{equation*}
%y la multiplicación hiperbólica es cerrada en $\D^{+}$.
\begin{proposition}\label{Prop:Pot_hyp_2}
Let $\alpha=a_{1}\be_{1}+a_{2}\be_{2},\beta=b_{1}\be_{1}+b_{2}\be_{2}\in \D$, then the hyperbolic power can be expressed as
\begin{equation*}
\alpha^{\beta}=(a_{1})^{b_{1}}\be_{1}+(a_{2})^{b_{2}}\be_{2}.
\end{equation*}
\end{proposition}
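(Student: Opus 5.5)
The plan is to define the hyperbolic power through the hyperbolic exponential and logarithm, and then use the fact that these operations act componentwise in the idempotent basis $\{\be_1,\be_2\}$. The idempotent identities $\be_1^2=\be_1$, $\be_2^2=\be_2$, $\be_1\be_2=0$ and $\be_1+\be_2=1_{\D}$ are the only algebraic input needed.

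\textbf{Step 1 (polynomials and power series).} For $\alpha=a_1\be_1+a_2\be_2$, induction on $n$ gives $\alpha^n=a_1^n\be_1+a_2^n\be_2$. The cross terms vanish because $\be_1\be_2=0$, and the pure terms are preserved by idempotency. Summing the exponential series and using absolute convergence in each component, we get
\[
\exp(\alpha)=\sum_{n\ge 0}\frac{\alpha^n}{n!}=e^{a_1}\be_1+e^{a_2}\be_2 .
\]
The constant term is handled by $1_{\D}=\be_1+\be_2$.

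\textbf{Step 2 (logarithm).} For $\alpha\in\D^{+}$, so that $a_1,a_2>0$, set $\fLog\alpha:=\log a_1\,\be_1+\log a_2\,\be_2$. By Step 1, $\exp(\fLog\alpha)=\alpha$, so $\fLog$ inverts $\exp$. The componentwise form also forces uniqueness: if $\exp(u_1\be_1+u_2\be_2)=\alpha$, then $e^{u_j}=a_j$ for $j=1,2$.

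\textbf{Step 3 (the power).} Define $\alpha^{\beta}:=\exp(\beta\,\fLog\alpha)$. Multiplying in the idempotent basis gives
\[
\beta\,\fLog\alpha=b_1\log a_1\,\be_1+b_2\log a_2\,\be_2,
\]
since the cross products are killed by $\be_1\be_2=0$. Applying Step 1 then yields
\[
\alpha^{\beta}=a_1^{b_1}\be_1+a_2^{b_2}\be_2 .
\]

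The main obstacle is not the algebra but the domain. The statement is written for all $\alpha\in\D$, whereas the real powers $a_j^{b_j}$ only make sense for $a_j>0$ (or for special exponents). I would therefore first prove the identity for $\alpha\in\D^{+}$ and arbitrary $\beta\in\D$. I would then note that it extends verbatim to whatever cases the real powers are defined in, for instance integer $b_j$, by taking the componentwise formula itself as the definition there. Consistency with the integer case is guaranteed by Step 1.
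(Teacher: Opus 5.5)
The paper gives no proof of this proposition. It is recalled in the preliminaries from the cited literature on hyperbolic numbers, and it then serves as the working \emph{definition} of $\alpha^{\beta}$: for $\rho_s^{-\xi}$ in the generating function, $\rho_s^{\alpha}$ in the Rényi extension, and $\rho_s^{0_{\D}}$ in the Hartley case. Your route is therefore genuinely different. You start from the more primitive definition $\alpha^{\beta}:=\exp(\beta\,\fLog_{\D}\alpha)$, with $\exp$ given by its power series, and derive the componentwise formula using only $\be_1^2=\be_1$, $\be_2^2=\be_2$, $\be_1\be_2=0$ and $\be_1+\be_2=1_{\D}$. The argument is correct. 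Componentwise convergence gives convergence in the two-dimensional space $\D$. Injectivity of the real exponential makes the logarithm on $\D^{+}$ unique, so no branch issue arises as in the complex case. Your logarithm also coincides with the paper's $\fLog_{\D}$. What your approach buys is a justification: the formula is shown to be forced by $\exp$ and $\fLog_{\D}$ and to agree with repeated multiplication for integer exponents, rather than being postulated. What the paper's stance buys is reach. The exp--log definition covers only $\alpha\in\D^{+}$, whereas the paper's definitions are stated for all hyperbolic probability distributions. These include elements with a vanishing component: distributions satisfying condition 2 or 3 consist of zero divisors, and any $p_{s,j}$ may be $0$. There $\fLog_{\D}\alpha$ does not exist, so, as you say, the componentwise formula itself must be taken as the definition. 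Even then it works only where the real powers make sense; $0^{-x_j}$ with $x_j>0$, which is what $\rho_s^{-\xi}$ produces for $\xi\in\D^{+}$, does not. Your explicit restriction of the domain is thus a legitimate sharpening of a statement that, read literally for all $\alpha,\beta\in\D$, is not well posed.
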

In $\D$ the modulus $\abs{\cdot}_{\bk}$ of the hyperbolic number $\alpha=a_{1}\be_{1}+a_{2}\be_{2}$ is defined by
\begin{equation*}
\abs{\alpha}_{\bk}:=\abs{a_{1}}\be_{1}+\abs{a_{2}}\be_{2}\in \D^{+}.
\end{equation*}

\subsection{Hyperbolic derivative}
We now continue with the definition of the hyperbolic derivative according to \cite{ViVe, GB21}.
\begin{definition}
A hyperbolic function valued $F:X\longrightarrow \D$ defined on $X\subset \D$ is determined by the idempotent representation:
\begin{equation*}
    F(x)=F_{1}(x)\be_{1}+F_{2}(x)\be_{2}, \quad x\in X,
\end{equation*}
where $F_{1}, F_{2}$ these are real-valued functions called components of $F$.
\end{definition}
\begin{definition}
A hyperbolic metric space is defined as the pair $(X,D)$ where $X\subset \D$ is a non-empty set and $D$ is a positive-valued hyperbolic function such that for any $x,y\in X$ it satisfies that:
    \begin{enumerate}
        \item $D(x,y)=0$ if only if $x=y$.
        \item $D(x,y)=D(y,x)$.
        \item $D(x,z) \preceq D(x,y)+D(y,z)$.
    \end{enumerate}
\end{definition}
The most common hyperbolic metric space is $(\D,D_{\bk})$ where $D_{\bk}$ is defined for all $\xi, \chi \in \D$ such that
\begin{equation*}
    D_{\bk}(\xi, \chi):=\abs{x_{1}-y_{1}}\be_{1}+\abs{x_{2}-y_{2}}\be_{2},
\end{equation*}
where $\xi=x_{1}\be_{1}+x_{2}\be_{2}$ and $\chi=y_{1}\be_{1}+y_{2}\be_{2}$.
\begin{definition}
The function $F:(X,D_{1})\longrightarrow (X,D_{2})$ is said to be continuous at $\xi_{0}$, if for all $\varepsilon\in \D^{+}$, then there exists a $\delta \in \D^{+}$ such that for any $\xi\in \D$ with $D_{1}(\xi,\xi_{0})\prec \delta$, then
\begin{equation*}
    D_{2}(F(\xi),F(\xi_{0}))\prec \varepsilon.
\end{equation*}
\end{definition}
If the spaces $(X,D_{1}), (Y,D_{2})$ are equal to $(\D, D_{\bk})$ then hyperbolic continuity implies continuity $(\R^{2},d_{e})$ where $d_{e}$ is the standard Euclidean metric.
\begin{definition}
Let $F:(\D, D_{\bk})\longrightarrow (\D,D_{\bk})$. $F$ is said to be differentiable in $\xi\in \D$, if the limit exists
\begin{align}\label{Eq:Hyper_deriv}
F'(\xi)=\lim_{\substack{\psi\to 0\\\psi\not\in\sG_{0}}}\frac{F(\xi+\psi)-F(\psi)}{\psi}.
\end{align}
\end{definition}
In \cite{ViVe} it was shown that any differentiable hyperbolic function $F=u+\bk v$ in the basis $\{1,\bk\}$, where $u$ and $v$ are real-valued functions, satisfies the Cauchy-Riemann type equations
\begin{align*}
    \frac{\partial u}{\partial x}(\xi)=\frac{\partial v}{\partial y}(\xi), \quad \frac{\partial u}{\partial y}(\xi)=-\frac{\partial v}{\partial y}(\xi).
\end{align*}
Furthermore, if $F$ is rewritten as $F=F_{1}\be_{1}+F_{2}\be_{2}$ at the point $\xi=x_{1}\be_{1}+x_{2}\be_{2}$, then
\begin{equation*}
    F(\xi)=F_{1}(x_{1})\be_{1}+F_{2}(x_{2})\be_{2}
\end{equation*}
and
\begin{equation}\label{Eq:Hyp_der_par}
F'(\xi)=\frac{\partial F_{1}}{\partial x_{1}}(\xi)\be_{1}+\frac{\partial F_{2}}{\partial x_{2}}(\xi)\be_{2}=\frac{d F_{1}}{d x_{1}}(x_{1})\be_{1}+\frac{d F_{2}}{d x_{2}}(x_{2})\be_{2}.
\end{equation}
Now, the notion of a convex and concave hyperbolic function is constructed.
\begin{definition}
    Let $F:X\longrightarrow \D$ defined on $X\subset \D$ be a hyperbolic function valued with idempotent representation $F=F_{1}\be_{1}+F_{2}\be_{2}$ on a hyperbolic interval $[0,1_{\D}]_{\D}=[0,1]\be_{1}+[0,1]\be_{2}$. $F$ is said to be a convex hyperbolic function if, given the points $\xi=x_{1}\be_{1}+x_{2}\be_{2},\chi=y_{1}\be_{1}+y_{2}\be_{2}\in \D$ on $[0,1_{\D}]_{\D}$ such that $\xi\preceq\chi$ and for any $\lambda\in [0,1_{\D}]_{\D}$ such that $\lambda=\lambda_{1}\be_{1}+\lambda_{2}\be_{2}\in \D$, then
\begin{equation}\label{Eq:Convex_H}
    F((1-\lambda)\xi+\lambda\chi)\preceq (1-\lambda)F(\xi)+\lambda F(\chi).
\end{equation}
Similarly, if $\xi\succeq \chi$ is said to be a concave hyperbolic function, then
\begin{equation}\label{Eq:Concave_H}
    F((1-\lambda)\xi+\lambda\chi)\succeq (1-\lambda)F(\xi)+\lambda F(\chi).
\end{equation}
\end{definition}
\noindent The expansion of the inequality \eqref{Eq:Convex_H} is equivalent to
\begin{align*}
    &(F_{1}\be_{1}+F_{2}\be_{2})(((1\be_{1}+1\be_{2})-(\lambda_{1}\be_{1}+\lambda_{2}\be_{2}))\\ &\times(x_{1}\be_{1}+x_{2}\be_{2})+(\lambda_{1}\be_{1}+\lambda_{2}\be_{2})(y_{1}\be_{1}+y_{2}\be_{2})) \\
    &\preceq ((1\be_{1}+1\be_{2})-(\lambda_{1}\be_{1}+\lambda_{2}\be_{2}))(F_{1}\be_{1}+F_{2}\be_{2})(x_{1}\be_{1}+x_{2}\be_{2})\\ &+(\lambda_{1}\be_{1}+\lambda_{2}\be_{2})(F_{1}\be_{1}+F_{2}\be_{2})(y_{1}\be_{1}+y_{2}\be_{2}).
\end{align*}
So, \eqref{Eq:Convex_H} in idempotent representation is
\begin{align*}
    &\Big(F_{1}((1-\lambda_{1})x_{1}+\lambda_{1}y_{1})\leq (1-\lambda_{1})F_{1}(x_{1})+\lambda_{1}F(y_{1})\Big)\be_{1}+\\
    &\Big(F_{2}((1-\lambda_{2})x_{2}+\lambda_{1}y_{2})\leq (1-\lambda_{1})F_{2}(x_{2})+\lambda_{2}F(y_{2})\Big)\be_{2},
\end{align*}
where $F_{1},F_{2}$ are convex real functions respectively. According to the above, \eqref{Eq:Concave_H} also has an idempotent representation 
\begin{align*}
    &\Big(F_{1}((1-\lambda_{1})x_{1}+\lambda_{1}y_{1})\geq (1-\lambda_{1})F_{1}(x_{1})+\lambda_{1}F(y_{1})\Big)\be_{1}+\\
    &\Big(F_{2}((1-\lambda_{2})x_{2}+\lambda_{1}y_{2})\geq (1-\lambda_{1})F_{2}(x_{2})+\lambda_{2}F(y_{2})\Big)\be_{2},
\end{align*}
where $F_{1},F_{2}$ are concave real functions respectively. From Jensen's inequality of real numbers, it follows that if $X$ is a random variable and given $g(x)$ a convex function, then $g(E(X))\leq E(g(X))$ and if $g(x)$ is a concave function, then $g(E(X))\geq E(g(X))$. Furthermore, $g(x)$ is convex if and only if $-g(x)$ is concave.

\vspace{2mm}

In \cite{GB21}, a logarithm function is defined in the hyperbolic environment: $\fLog_{\D}$ (its use in the Shannon strong hyperbolic entropy is detailed in the following section). This logarithm is expressed in idempotent representation as
\begin{equation*}
    \fLog_{\D} \rho_{s}=\log p_{s,1}\be_{1}+\log p_{s,2}\be_{2}, \quad s=1,\ldots, N,
\end{equation*}
where $\log p_{s,1}$ and $\log p_{s,2}$ are concave real functions respectively. Then we can understand the hyperbolic extension of Jensen's inequality as an application of Jensen’s inequality to each component.

The following Theorem is a hyperbolic extension of L'Hôpital's Rule.
\begin{theorem}[Hyperbolic extension of L'Hôpital's rule]\label{Th:LHop_H}
    Let $F=F_{1}\be_{1}+F_{2}\be_{2}$ and $G=G_{1}\be_{1}+G_{2}\be_{2}$ be two hyperbolic functions valued at $\xi=x_{1}\be_{1}+x_{2}\be_{2}\in \D$ with idempotent representation. Suppose that \begin{align*}
        \lim_{\xi\to \xi_{0}}F(\xi)&=\left(\lim_{x_{1}\to a_{1}}F_{1}(x_{1})\right)\be_{1}+\left(\lim_{x_{2}\to a_{2}}F_{2}(x_{2})\right)\be_{2}\\&=0\be_{1}+0\be_{2},\\ \lim_{\xi\to \xi_{0}}G(\xi)&=\left(\lim_{x_{1}\to a_{2}}G_{1}(x_{1})\right)\be_{2}+\left(\lim_{x_{2}\to a_{2}}G_{2}(x_{2})\right)\be_{2}\\&=0\be_{1}+0\be_{2},
    \end{align*}
    where $\xi_{0}=a_{1}\be_{1}+a_{2}\be_{2}\in \D$. Let us also assume that there exists
    \begin{align*}
        \lim_{\xi\to\xi_{0}}\frac{F'(\xi)}{G'(\xi)}=\left(\lim_{x_{1}\to a_{1}}\frac{F_{1}'(x_1)}{G_{1}'(x_1)}\right)\be_{1}+\left(\lim_{x_{2}\to a_{2}}\frac{F_{2}'(x_2)}{G_{2}'(x_2)}\right)\be_{2},
    \end{align*}
    where $G'(\xi_{0})\not\in\sG_{0}$. Then there is
    \begin{align*}
        \lim_{\xi \to \xi_{0}}\frac{F(\xi)}{G(\xi)}=\left(\lim_{x_{1}\to a_{1}}\frac{F_{1}(x_1)}{G_{1}(x_1)}\right)\be_{1}+\left(\lim_{x_{2}\to a_{2}}\frac{F_{2}(x_2)}{G_{2}(x_2)}\right)\be_{2}
    \end{align*}
    and
    \begin{equation*}
        \lim_{\xi\to\xi_{0}}\frac{F(\xi)}{G(\xi)}=\lim_{\xi\to \xi_{0}}\frac{F'(\xi)}{G'(\xi)}.
    \end{equation*}
\end{theorem}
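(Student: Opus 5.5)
The plan is to reduce everything to the classical real L'Hôpital rule applied separately in each idempotent component. The whole ring structure of $\D$ is componentwise in the basis $\{\be_{1},\be_{2}\}$. Products, quotients (where defined) and limits with respect to $D_{\bk}$ all split this way, and by \eqref{Eq:Hyp_der_par} so does the hyperbolic derivative. So the theorem should follow from two applications of the real rule, followed by reassembly.

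First I will record the componentwise facts I need. Since $D_{\bk}(\xi,\xi_{0})=\abs{x_{1}-a_{1}}\be_{1}+\abs{x_{2}-a_{2}}\be_{2}$ and $\prec$ is the strict order in both components, $\xi\to\xi_{0}$ in $(\D,D_{\bk})$ means exactly $x_{1}\to a_{1}$ and $x_{2}\to a_{2}$. Likewise, a hyperbolic limit $L=L_{1}\be_{1}+L_{2}\be_{2}$ exists if and only if both component limits exist. Using $\be_{1}\be_{2}=0$ and $\be_{j}^{2}=\be_{j}$, the inverse of $\alpha=a_{1}\be_{1}+a_{2}\be_{2}\notin\sG_{0}$ is $a_{1}^{-1}\be_{1}+a_{2}^{-1}\be_{2}$. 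Hence $F/G=(F_{1}/G_{1})\be_{1}+(F_{2}/G_{2})\be_{2}$ whenever $G\notin\sG_{0}$, and by \eqref{Eq:Hyp_der_par} also $F'/G'=(F_{1}'/G_{1}')\be_{1}+(F_{2}'/G_{2}')\be_{2}$. This justifies the componentwise expressions written in the hypotheses.

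Next comes the core step. The hypotheses give $F_{j}(x_{j})\to 0$ and $G_{j}(x_{j})\to 0$ as $x_{j}\to a_{j}$, and the existence of $\lim_{x_{j}\to a_{j}}F_{j}'/G_{j}'$ for $j=1,2$. The real L'Hôpital rule then gives $\lim_{x_{j}\to a_{j}}F_{j}/G_{j}=\lim_{x_{j}\to a_{j}}F_{j}'/G_{j}'$ for each $j$. Multiplying by $\be_{j}$, summing, and using the previous paragraph yields both displayed conclusions at once.

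The main obstacle is verifying the side conditions of the real rule: each $F_{j},G_{j}$ must be differentiable on a punctured neighbourhood of $a_{j}$ with $G_{j}'\neq 0$ there. Equivalently, $G(\xi)$ and $G'(\xi)$ must stay outside $\sG_{0}$ near $\xi_{0}$ so that the hyperbolic quotients make sense. I will read the hypothesis that $\lim F'/G'$ exists, together with $G'(\xi_{0})\notin\sG_{0}$, as including hyperbolic differentiability of $F$ and $G$ near $\xi_{0}$. Continuity of the components of $G'$ then keeps $G_{j}'\neq0$ near $a_{j}$. I also need $G_{j}(x_{j})\neq 0$ for $x_{j}\neq a_{j}$ close to $a_{j}$, which I will obtain from Rolle's theorem, since $G_{j}(a_{j})=0$ after continuous extension. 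If continuity of $G'$ is not available, I would instead state these nonvanishing conditions explicitly as part of the hypotheses, since they are implicit in the quotients being defined.
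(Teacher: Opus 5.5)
Your proposal is correct and is the same argument as the paper's, which simply applies the real L'Hôpital rule in each idempotent component and reassembles via $\be_{1},\be_{2}$. Your bookkeeping for componentwise limits, inverses and derivatives only spells out what the paper leaves implicit. One adjustment: you should not rely on continuity of $G'$, which is not given, and $G'(\xi_{0})\notin\sG_{0}$ alone does not keep $G_{j}'\neq 0$ nearby. Your fallback is the standard reading: the existence of $\lim_{x_{j}\to a_{j}}F_{j}'/G_{j}'$ already presupposes differentiability and $G_{j}'\neq 0$ on a punctured neighbourhood of $a_{j}$, and the real rule's own proof then gives $G_{j}\neq 0$ there.
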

\begin{proof}
    Since the expressions can be written with the idempotent representation and since each of the inputs are real, then we apply L'Hôpital's Rule \cite[Chapter 11, Theorem 9]{Spiv} component-by-component together with differentiability. 
\end{proof}

\section{Hyperbolic Probability and Entropy}
\subsection{Hyperbolic probability}
In \cite{Alp17}, the following notion of a valued hyperbolic probability measure was introduced. This complies with the generalized axiom system of the Kolmogorov system, where the properties of non-negative hyperbolic numbers are highlighted.
\begin{definition}
Let $\sB=\{\rho_{1},\rho_{2}, \ldots, \rho_{N}\}$ a finite set of hyperbolic numbers in $[0,1_{\D}]_{\D}$ given by their idempotent representation $\rho_{s}=p_{s,1}\be_{1}+p_{s,2}\be_{2},\ s=1,\ldots, N$. We say that $\sB$ is a hyperbolic probability distribution if we have one of these conditions:
\begin{enumerate}
    \item $\displaystyle\sum_{s=1}^{N}\rho_{s}=1_{\D}$,
    \item $\displaystyle\sum_{s=1}^{N}\rho_{s}=1\be_{1}$,
    \item $\displaystyle\sum_{s=1}^{N}\rho_{s}=1\be_{2}$.
\end{enumerate}
\end{definition}
\begin{remark}
Cases 2 and 3 imply that $\rho_{s}=p_{s,1}\be_{1}$ for each $1\leq s\leq n$ or we have $\rho_{s}=p_{s,2}\be_{2}$, for each $1\leq s \leq n$. So $\rho_{s}$ turns out to be a divisor of zero.
\end{remark}
%De la noción de  probabilidad de espacio producto, existe una interpretación de probabilidades hiperbólicas valuadas. 
%\begin{definition}
%Una probabilidad hiperbólica valuada $\rho_{s}=p_{s,1}\be_{1}+p_{s,2}\be_{2}$ se identifica con
%\begin{equation}\label{Eq:UsAP_R}
 %   p_{s}:=\frac{p_{s,1}+p_{s,2}}{2},
%\end{equation}
%la cuál es la probabilidad usual acumulada en el caso real. \end{definition}

\subsection{Hyperbolic entropy}
In \cite{GB21}, the weak $S_{d}$ and strong $S_{f}$ hyperbolic extensions of Shannon entropy are introduced, where the holomorphic logarithm function is now replaced by the logarithm function in the theory of hyperbolic functions. We will take the strong hyperbolic extension of Shannon entropy as the Shannon strong hyperbolic entropy.

%donde es utilizada la relación de correspondencia hiperbólica \eqref{Eq:Emb_RtoD} del logaritmo real y las probabilidades acumuladas del tipo \eqref{Eq:UsAP_R}.
%\begin{definition}\label{Def:W_Sha} 
%Sea $\sB=(\rho_{1}, \rho_{2}, \ldots, \rho_{N})$ una distribución de probabilidad hiperbólica, se define la entropía hiperbólica débil asociada a $\sB$, como 
%\begin{equation}\label{Eq:W_HE}
%\begin{split}
 %    S_{d}(\sB)&=\sum_{s=1}^{N}-\rho_{s}\widetilde{\log}\,p_{s}\\&=\sum_{s=1}^{N}-\rho_{s}\left(\log(\frac{p_{s,1}+p_{s,2}}{2})\be_{1}+ \log(\frac{p_{s,1}+p_{s,2}}{2}) \be_{2}\right).
%\end{split}
%\end{equation}
%\end{definition}
%Aunque la entropía $ S_{d}$ puede ser claramente considerada como una generalización de la entropía de Shannon, cabe señalar que la expresión funcional de la entropía hiperbólica débil no cambia en esencia la medida de su antecesora. 
%Por ello, los autores propusieron una extensión hiperbólica fuerte de la entropía de Shannon, donde la función logaritmo holomorfo, es ahora sustituida por la bien definida función logaritmo en la teoría de funciones hiperbólicas.

\begin{definition}\label{Def:Stg_Sha}
Let $\sB=(\rho_{1}, \rho_{2}, \ldots, \rho_{N})$ a hyperbolic probability distribution. The Shannon strong hyperbolic entropy associated with $\sB$ is defined as 
\begin{equation}\label{Eq:Str_HE}
\begin{split}
    S_{f}(\sB)&=\sum_{s=1}^{N}-\rho_{s}\fLog_{\D}\rho_{s}\\ 
    &:=\sum_{s=1}^{N}-\rho_{s}(\log p_{s,1}\be_{1}+\log p_{s,2}\be_{2}),
\end{split}
\end{equation}
where $\rho_{s}=p_{s,1}\be_{1}+p_{s,2}\be_{2}$.
\end{definition}
\begin{remark}
Since the Shannon entropy measure, both hyperbolic extensions have fundamental properties that are legitimate as reasonable measures of choice:
\begin{enumerate}
\item Only if we are sure that the result of the measure disappears, otherwise it is positive.
\item Both $S_{d}$ and $S_{f}$ reach their maximum value and are equivalent to one of the values $\log (N)$, $\log (N)\be_{1}$, $\log(N)\be_{2}$ or also $\fLog_{\D}(N),\fLog_{\D}(N)\be_{1},\fLog_{\D}(N)\be_{2}$, respectively, if all the $\rho_{k}$ are equal (for example $\displaystyle\frac{1}{N}, \frac{1}{N}\be_{1},\frac{1}{N}\be_{2}$), which is the most uncertain situation.
\end{enumerate}
\end{remark}
\subsection{Rewriting the Shannon strong hyperbolic entropy}

We now aim to rewrite the Shannon strong hyperbolic entropy \eqref{Eq:Str_HE} in a different form, according to the following procedure: calculate the hyperbolic derivative of the hyperbolic generating function and the subsequent limit step.
\begin{definition}
Let $\sB=(\rho_{1}, \rho_{2}, \ldots, \rho_{N})$ be a hyperbolic probability distribution and $\xi=x_{1}\be_{1}+x_{2}\be_{2}\in \D^{+}$. The hyperbolic generating function is defined as
    \begin{align*}
        \sum_{s=1}^{N}\rho_{s}^{-\xi}
        &=\sum_{s=1}^{N}(p_{s,1}\be_{1}+p_{s,2}\be_{2})^{-(x_{1}\be_{1}+x_{2}\be_{2})}\\
        &=\sum_{s=1}^{N}(p_{s,1})^{-x_{1}}\be_{1}+\sum_{s=1}^{N}(p_{s,2})^{-x_{2}}\be_{2},
    \end{align*}
where the respective components are analogous to the real generating functions.
\end{definition}
\begin{proposition}
The Shannon strong hyperbolic entropy \eqref{Eq:Str_HE} can be rewritten, in analogy to the real case \eqref{Eq:E_Slim}.
\end{proposition}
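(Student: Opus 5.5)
The claim to establish is
\[
S_{f}(\sB)=\lim_{\xi\to -1_{\D}}\frac{d}{d\xi}\left(\sum_{s=1}^{N}\rho_{s}^{-\xi}\right),
\]
where $d/d\xi$ is the hyperbolic derivative \eqref{Eq:Hyper_deriv}. I will work throughout in the idempotent representation and reduce to two copies of the real identity \eqref{Eq:E_Slim}.

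First I write the generating function as
\[
G(\xi)=G_{1}(x_{1})\be_{1}+G_{2}(x_{2})\be_{2},\qquad G_{j}(x_{j})=\sum_{s=1}^{N}p_{s,j}^{-x_{j}}.
\]
This uses Proposition \ref{Prop:Pot_hyp_2} together with the rules $\be_{1}^{2}=\be_{1}$, $\be_{2}^{2}=\be_{2}$ and $\be_{1}\be_{2}=0$, so that sums split componentwise. Each component $G_{j}$ is a finite sum of real exponentials $x\mapsto e^{-x\log p_{s,j}}$, so it is $C^{1}$ in its own variable. By \eqref{Eq:Hyp_der_par}, $G$ is hyperbolically differentiable and
\[
G'(\xi)=\Big(-\sum_{s}p_{s,1}^{-x_{1}}\log p_{s,1}\Big)\be_{1}+\Big(-\sum_{s}p_{s,2}^{-x_{2}}\log p_{s,2}\Big)\be_{2}.
\]

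Next I take the limit $\xi\to-1_{\D}=(-1)\be_{1}+(-1)\be_{2}$. Convergence in $D_{\bk}$ is exactly componentwise convergence $x_{1}\to-1$ and $x_{2}\to-1$, so the limit splits into two real limits. By continuity, or directly by \eqref{Eq:E_Slim}, these give $-\sum_{s}p_{s,j}\log p_{s,j}$ for $j=1,2$. Recombining, and using $\rho_{s}\fLog_{\D}\rho_{s}=p_{s,1}\log p_{s,1}\be_{1}+p_{s,2}\log p_{s,2}\be_{2}$ (again because the product is componentwise), gives exactly \eqref{Eq:Str_HE}.

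The main obstacle is the boundary and zero-divisor cases, which sit awkwardly with the domain $\xi\in\D^{+}$ used in the definition of the generating function.
\begin{itemize}
\item If some $p_{s,j}=0$, then $p_{s,j}^{-x_{j}}\log p_{s,j}$ is not defined. I will adopt the standard convention $0\log 0=0$ and check that the limiting term tends to $0$ as $x_{j}\to-1$, since $p^{1+\epsilon}\log p\to0$.
\item If $\sB$ sums to $1\be_{1}$ or $1\be_{2}$, then one component vanishes identically and contributes $0$ on both sides, so the identity still holds.
\item The limit point $-1_{\D}$ lies outside $\D^{+}$. I will therefore treat the generating function as defined for all $\xi\in\D$, which is legitimate because the component formulas make sense for every real $x_{j}$.
\end{itemize}
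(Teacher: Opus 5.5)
Your proof is correct and follows the same route as the paper. Write the generating function in idempotent form, differentiate componentwise via \eqref{Eq:Hyp_der_par}, let $x_1,x_2\to-1$ separately, and recombine into $-\sum_{s}\rho_s\fLog_{\D}\rho_s$.

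Your treatment of the degenerate cases goes beyond the paper, which ignores them and even restricts $\xi$ to $\D^{+}$, a set that does not contain $-1_{\D}$. One imprecision: when some $p_{s,j}=0$, the component formula is not defined for all real $x_j$, so you should work on a neighbourhood such as $\{\xi\in\D:\xi\prec 0\}$. There $0^{-x_j}\equiv 0$, so that term contributes $0$ to the derivative directly, with no need to appeal to $p^{1+\epsilon}\log p\to 0$.
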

\begin{proof}
The hyperbolic derivative of the hyperbolic generating function with respect to $\xi$ is then computed using the following equality \eqref{Eq:Hyp_der_par}:
\begin{align*}
    \left(\sum_{s=1}^{N}\rho_{s}^{-\xi}\right)'=&-\sum_{s=1}^{N}(p_{s,1})^{-x_{1}}\log p_{s,1}\be_{1}\\&-\sum_{s=1}^{N}(p_{s,2})^{-x_{2}}\log p_{s,2}\be_{2},
\end{align*}
where the component-by-component derivatives correspond to the generating function of the real Shannon entropy.

Then the limit is taken $\xi=x_{1}\be_{1}+x_{2}\be_{2} \longrightarrow -1_{\D}$. It follows that
\begin{align*}
    S_{f}(\sB)&=\lim_{\xi\to-1_{\D}}\left(\sum_{s=1}^{N}(p_{s,1})^{-x_{1}}\be_{1}+\sum_{s=1}^{N}(p_{s,2})^{-x_{2}}\be_{2}\right)'\\
    &=\lim_{x_{1}\to-1}\frac{\partial}{\partial x_{1}}\left(\sum_{s=1}^{N}(p_{s,1})^{-x_{1}}\right) \be_{1}+\lim_{x_{2}\to-1}\frac{\partial}{\partial x_{2}}\left(\sum_{s=1}^{n}(p_{s,2})^{-x_{2}}\right) \be_{2}	\\
    &=\lim_{x_{1}\to-1}\left(-\sum_{s=1}^{N}(p_{s,1})^{-x_{1}}\log p_{s,1}\right)\be_{1}+\lim_{x_{2}\to-1}\left(-\sum_{s=1}^{N}(p_{s,2})^{-x_{2}}\log p_{s,2}\right)\be_{2}\\
    &=\left(-\sum_{s=1}^{N}(p_{s,1})\log p_{s,1}\right)\be_{1}+\left(-\sum_{s=1}^{N}(p_{s,2})\log p_{s,2}\right)\be_{2}\\
    &=\sum_{s=1}^{N}-(p_{s,1}\be_{1}+p_{s,2}\be_{2})(\log p_{s,1}\be_{1}+\log p_{s,2}\be_{2}) \\
    &=\sum_{s=1}^{N}-\rho_{s}\fLog_{\D}(\rho_{s}).
\end{align*}
\end{proof}

\section{Lesche stability for the Shannon strong hyperbolic entropy}
This section focuses on the analysis of Lesche stability for the Shannon strong hyperbolic entropy.
\begin{theorem}
Let
$S_{f}:([0,1]_{\D}^{N},\norm{\cdot}_{\bk})\longrightarrow (\D,\abs{\cdot}_{\bk})$ be the Shannon strong hyperbolic entropy. Given two hyperbolic probability distributions $\sB, \sB'\in [0,1]_{\D}^{N}$, then
\begin{align*}
\norm{\sB-\sB'}_{\bk}\prec\delta \Rightarrow \frac{\abs{S_{f}(\sB)-S_{f}(\sB')}_{\bk}}{\widetilde{\log}N}\prec\varepsilon.
\end{align*}
\end{theorem}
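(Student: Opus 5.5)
The plan is to reduce the hyperbolic statement to two copies of the classical Lesche stability of Shannon entropy \eqref{Eq:Est_Shan}, one for each idempotent component. First the quantifiers and norms have to be read componentwise. For $\sB=(\rho_1,\ldots,\rho_N)$ with $\rho_s=p_{s,1}\be_1+p_{s,2}\be_2$ and $\sB'$ similarly with $p'_{s,j}$, take
\[
\norm{\sB-\sB'}_{\bk}=\sum_{s=1}^{N}\abs{\rho_s-\rho_s'}_{\bk}=\Big(\sum_{s=1}^{N}\abs{p_{s,1}-p'_{s,1}}\Big)\be_1+\Big(\sum_{s=1}^{N}\abs{p_{s,2}-p'_{s,2}}\Big)\be_2 .
\]
Then, by Definition \ref{Def:Stg_Sha},
\[
S_f(\sB)=S(\sP_1)\be_1+S(\sP_2)\be_2,
\]
where $\sP_j=(p_{1,j},\ldots,p_{N,j})$ and $S$ is the real Shannon entropy. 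The normalizer is $\widetilde{\log}N=\log N\,\be_1+\log N\,\be_2$, which is invertible since $N\ge 2$. Division is therefore componentwise, and
\[
\frac{\abs{S_f(\sB)-S_f(\sB')}_{\bk}}{\widetilde{\log}N}=\frac{\abs{S(\sP_1)-S(\sP_1')}}{\log N}\be_1+\frac{\abs{S(\sP_2)-S(\sP_2')}}{\log N}\be_2 .
\]
The statement is to be read as: for every $\varepsilon=\varepsilon_1\be_1+\varepsilon_2\be_2\in\D^+$ there is $\delta\in\D^+$ such that the implication holds. Since $\prec$ means strict inequality in both components, the whole statement splits into two real statements.

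I will treat the main case, $\sum_s\rho_s=1_{\D}$, where each $\sP_j$ is an honest probability vector. In each component I invoke the real Lesche stability of Shannon entropy. Given $\varepsilon_j>0$ there is $\delta_j>0$ such that $\sum_s\abs{p_{s,j}-p'_{s,j}}<\delta_j$ implies $\abs{S(\sP_j)-S(\sP_j')}/\log N<\varepsilon_j$. Setting $\delta=\delta_1\be_1+\delta_2\be_2\in\D^+$ and combining the two implications via the idempotent decomposition gives the theorem.

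The degenerate cases $\sum_s\rho_s=1\be_1$ or $1\be_2$ work the same way. Here one component is identically zero. If both distributions degenerate in the same way, that component contributes $0\prec\varepsilon_j$ trivially, since $0\log 0=0$. For these cases the statement should be read with strict inequality only in the nontrivial component, or with $\preceq$ in the null one. I would remark on this rather than belabor it.

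The main obstacle is that the real Lesche stability \eqref{Eq:Est_Shan} is only stated in the paper, not proved. If a self-contained argument is wanted, I would prove it as in Lesche's original work. Put $\phi(x)=-x\log x$, which is continuous on $[0,1]$ and hence uniformly continuous. Split the indices according to whether $\abs{p_s-p'_s}$ is small. The key estimate is $\abs{\phi(x)-\phi(y)}\le \phi(\abs{x-y})+\abs{x-y}\log N$-type bounds, combined with concavity (Jensen) of $\phi$: it gives $\sum_s\phi(\abs{p_s-p'_s})\le \delta\log(N/\delta)$ when $\sum_s\abs{p_s-p'_s}=\delta$. This yields
\[
\abs{S(\sP)-S(\sP')}\le \delta\log N+\delta\log(1/\delta)+O(\delta).
\]
Dividing by $\log N$ gives a bound tending to $0$ as $\delta\to0$ uniformly in $N$, which is exactly the componentwise input needed above.
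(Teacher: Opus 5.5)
Your proposal is correct and follows the paper's own proof. You decompose everything in the idempotent basis, apply the real Lesche stability of Shannon entropy in each component with $\varepsilon_j$, and set $\delta=\delta_1\be_1+\delta_2\be_2$. Your extras go beyond the paper: the invertibility of $\widetilde{\log}N$, the treatment of degenerate distributions, and the Fannes-type sketch of the real case, which the paper only cites. One small correction: in the degenerate case the null component gives $0<\varepsilon_j$ strictly, so no weakening to $\preceq$ is needed, and mixed degenerate pairs are vacuous once $\delta_j\le 1$.
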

\begin{proof}
Let $\sB=(\rho_{1}, \rho_{2}, \ldots, \rho_{N})$ y $\sB'=(\rho_{1}', \rho_{2}', \ldots, \rho_{N}')$ be two hyperbolic probability distributions in idempotent representation. Recall that  $\rho_s = p_{s,1}   \be_1 + p_{s,2}   \be_2 $ y $\rho_s' = p_{s,1}'   \be_1 + p_{s,2}'   \be_2 $ for each $s\in\{1,\dots, N\}$ and to simplify the notation denote
\begin{align*}
    \sB=\sP_{1}\be_{1}+\sP_{2}\be_{2}, \quad \sB'=\sP_{1}'\be_{1}+\sP_{2}'\be_{2},
\end{align*}
    where 
\begin{align*}
    \sP_{1}=(p_{1,1},\ldots,p_{N,1}), &\quad \sP_{1}'=(p_{1,1}',\ldots,p_{N,1}'),\\ 
    \sP_{2}=(p_{1,2},\ldots,p_{N,2}), &\quad \sP_{2}'=(p_{1,2}',\ldots,p_{N,2}').
\end{align*}
On the other hand, it is defined
\begin{align*}
    \norm{\sB-\sB'}_{\bk}:=\norm{\sP_{1}-\sP_{1}'}\be_{1}+\norm{\sP_{2}-\sP_{2}'}\be_{2}.
\end{align*}
We apply the metric \eqref{Eq:Le_norm_real}, then
\begin{align*}
    \norm{\sB-\sB'}_{\bk}&=\left(\sum_{s=1}^{N}\abs{p_{s,1}-p_{s,1}'}\right)\be_{1}+\left(\sum_{s=1}^{N}\abs{p_{s,2}-p_{s,2}'}\right)\be_{2}.
\end{align*}
Using the Lesche stability of Shannon entropy in the real case \eqref{Eq:Est_Shan} and given $\varepsilon = \varepsilon_1 \be _1 + \varepsilon_2\be _2 \succ 0 $, i.e.,  $\varepsilon_{1}, \varepsilon_{2}>0$, in each of the above summation there are $\delta_{1},\delta_{2}>0$, respectively such that
\[ 
\left(\sum_{s=1}^{N}\abs{p_{s,1}-p_{s,1}'}\right)< \delta_{1} \Rightarrow \abs{S(\sP_{1})-S(\sP_{1}')}< \varepsilon_{1},
\]
\[
\left(\sum_{s=1}^{N}\abs{p_{s,2}-p_{s,2}'}\right)< \delta_{2}, \Rightarrow \abs{S(\sP_{2})-S(\sP_{2}')}< \varepsilon_{2},
\] 
hence 
\begin{align*}
    \sum_{s=1}^{N}\abs{\sP_{1}-\sP_{1}'}\be_{1}+\sum_{s=1}^{N}\abs{\sP_{2}-\sP_{2}'}\be_{2}\prec\delta_{1}\be_{1}+\delta_{2}\be_{2}=:\delta
\end{align*}
implies
\begin{align*}
\frac{\abs{S_{f}(\sB)-S_{f}(\sB')}_{\bk}}{\widetilde{\log}N}&=\frac{\abs{S(\sP_{1})-S(\sP_{1}')}}{\log N}\be_{1}+\frac{\abs{S(\sP_{2})-S(\sP_{2}')}}{\log N}\be_{2}\\
    &\prec \varepsilon_{1}\be_{1}+\varepsilon_{2}\be_{2}=:\varepsilon.
\end{align*}
where $\varepsilon \succ 0$ and the Lesche stability is proven. 
\end{proof}

\section{The hyperbolic extension of Rényi entropy}
Motivated by \cite{GB21}, as we have already mentioned, the Shannon strong hyperbolic entropy is presented. With that impetus, the following entropy is introduced and establish some of its properties.
\subsubsection*{The hyperbolic extension of Rényi entropy of hyperbolic order $\alpha$} 
\begin{definition}[]
Let $\alpha=a_{1}\be_{1}+a_{2}\be_{2} \in \D$ such that $1_{\D}-\alpha \not\in\sG_{0}$ and let $\sB=(\rho_{1}, \rho_{2}, \ldots, \rho_{N})$ be a hyperbolic probability distribution. Then the hyperbolic extension of Rényi entropy of hyperbolic order $\alpha$, denoted by $\sR_{\alpha}$, is defined as:
\begin{equation}\label{Eq:RE_hyp_1}
\sR_{\alpha}(\sB):=\frac{1_{\D}}{1_{\D}-\alpha}\fLog_{\D} \left(\sum_{s=1}^{N}\rho_{s}^{\alpha}\right).
\end{equation}
\end{definition}
We see immediately that
\begin{align*}
    \sR_{\alpha}(\sB)&=\left(\frac{1\be_{1}+1\be_{2}}{(1\be_{1}+1\be_{2})-(a_{1}\be_{1}+a_{2}\be_{2})}\right)\fLog_{\D}\left(\sum_{s=1}^{N} (p_{s,1}\be_{1}+p_{s,2}\be_{2})^{a_{1}\be_{1}+a_{2}\be_{2}} \right)\\
    &=\left(\frac{1\be_{1}+1\be_{2}}{(1-a_{1})\be_{1}+(1-a_{2})\be_{2}} \right)\fLog_{\D}\left(\sum_{s=1}^{N}(p_{s,1}^{a_{1}})\be_{1}+\sum_{s=1}^{N}(p_{s,2}^{a_{2}})\be_{2}\right) \\
    &=\left(\frac{1}{1-a_{1}}\be_{1}+\frac{1}{1-a_{2}}\be_{2} \right)\left(\log\sum_{s=1}^{N}(p_{s,1}^{a_{1}})\be_{1}+\log\sum_{s=1}^{N}(p_{s,2}^{a_{2}})\be_{2}\right) \\
    &=\left(\frac{1}{1-a_{1}}\log\sum_{s=1}^{N}(p_{s,1}^{a_{1}})\right)\be_{1}+\left(\frac{1}{1-a_{2}}\log\sum_{s=1}^{N}(p_{s,2}^{a_{2}})\right)\be_{2}\\
    &=R_{a_{1}}(\sP_{1})\be_{1}+R_{a_{2}}(\sP_{2})\be_{2}.
\end{align*}
The above tells us that we can express $\sR_{\alpha}(\sB)$ in idempotent form, and since each component $R_{a_{1}}(\sP_{1})$, $R_{a_{2}}(\sP_{2})$ represents analogues of the Rényi entropy measure \eqref{Eq:R_E}, these are Lesche unstable. Consequently, the hyperbolic extension of Rényi entropy of hyperbolic order $\alpha$ \eqref{Eq:RE_hyp_1} is also Lesche unstable. 
\begin{proposition}
It immediately follows that 
\begin{equation*}\label{Eq:RE_hyp_2}
\sR_{1_{\D}}(\sB)=\lim_{\alpha\to 1_{\D}}\sR_{\alpha}(\sB)=-\sum_{s=1}^{n}\rho_{s}\fLog_{\D}(\rho_{s}).
\end{equation*}
\end{proposition}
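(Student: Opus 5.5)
We plan to reduce the statement to the real Rényi-to-Shannon limit, one idempotent component at a time, in the same way the rewriting of $S_{f}$ via the generating function was handled. By the computation just preceding the proposition, for every $\alpha=a_{1}\be_{1}+a_{2}\be_{2}$ with $1_{\D}-\alpha\notin\sG_{0}$ we have
\begin{equation*}
\sR_{\alpha}(\sB)=R_{a_{1}}(\sP_{1})\be_{1}+R_{a_{2}}(\sP_{2})\be_{2}.
\end{equation*}
Letting $\alpha\to 1_{\D}$ in $(\D,D_{\bk})$ with $\alpha$ avoiding the set where $1_{\D}-\alpha\in\sG_{0}$ means exactly $a_{1}\to 1$ and $a_{2}\to 1$ with $a_{1},a_{2}\neq 1$. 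Because $D_{\bk}$ is computed componentwise, the hyperbolic limit exists if and only if the two real limits exist, and it equals their idempotent combination. So it suffices to show $\lim_{a_{j}\to 1}R_{a_{j}}(\sP_{j})=-\sum_{s}p_{s,j}\log p_{s,j}$ for $j=1,2$.

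For each component we write $R_{a}(\sP_{j})=F_{j}(a)/G_{j}(a)$, where $F_{j}(a)=\log\sum_{s}p_{s,j}^{a}$ and $G_{j}(a)=1-a$. Both vanish at $a=1$ because $\sum_{s}p_{s,j}=1$, which is case 1 of the definition of a hyperbolic probability distribution. Their derivatives are
\begin{equation*}
F_{j}'(a)=\frac{\sum_{s}p_{s,j}^{a}\log p_{s,j}}{\sum_{s}p_{s,j}^{a}},\qquad G_{j}'(a)=-1.
\end{equation*}
Terms with $p_{s,j}=0$ are treated with the convention $0^{a}\log 0=0$ for $a>0$, so they drop out near $a=1$. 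Assembling $F=F_{1}\be_{1}+F_{2}\be_{2}$ and $G=G_{1}\be_{1}+G_{2}\be_{2}$, we have $G'(1_{\D})=-1_{\D}\notin\sG_{0}$. Hence Theorem~\ref{Th:LHop_H}, the hyperbolic L'Hôpital rule, applies and gives
\begin{equation*}
\lim_{\alpha\to1_{\D}}\sR_{\alpha}(\sB)=\Big(-\sum_{s}p_{s,1}\log p_{s,1}\Big)\be_{1}+\Big(-\sum_{s}p_{s,2}\log p_{s,2}\Big)\be_{2}.
\end{equation*}
The final step is to recombine this using the product rule in idempotent form, $\rho_{s}\fLog_{\D}\rho_{s}=p_{s,1}\log p_{s,1}\be_{1}+p_{s,2}\log p_{s,2}\be_{2}$. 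This matches the last lines of the proof of the generating-function rewriting and yields $-\sum_{s}\rho_{s}\fLog_{\D}\rho_{s}=S_{f}(\sB)$. We then take this value as the definition of $\sR_{1_{\D}}(\sB)$.

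The main obstacle is not the calculus but the bookkeeping at the degenerate configurations. First, we need that approaching $1_{\D}$ along non-zero-divisor increments still makes each component independently tend to $1$; this follows because any real pair $(a_{1},a_{2})$ with $a_{1},a_{2}\ne1$ is admissible. Second, the vanishing probabilities must be handled consistently with $0\log0=0$. Finally, in cases 2 and 3 of the definition, where $\sum_{s}\rho_{s}=\be_{1}$ or $\be_{2}$, one component is identically zero and its $\log$ is undefined. We would either restrict the statement to case 1 or treat only the surviving component, noting that the zero-divisor multiplication makes the other component irrelevant.
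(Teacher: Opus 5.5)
Your proposal is correct and follows essentially the paper's route. You rewrite $\sR_{\alpha}(\sB)$ as $R_{a_{1}}(\sP_{1})\be_{1}+R_{a_{2}}(\sP_{2})\be_{2}$, take the limit componentwise, and recombine into $-\sum_{s}\rho_{s}\fLog_{\D}\rho_{s}$; the paper simply quotes the real R\'enyi-to-Shannon limit at this point, whereas you justify it by the L'H\^opital computation it only carries out in the next proposition, and your remarks on the admissible approach set and the degenerate cases $\sum_{s}\rho_{s}=\be_{1}$ or $\be_{2}$ (where $\fLog_{\D}$ of a zero divisor is undefined) are valid refinements the paper leaves implicit.
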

\begin{proof}
Indeed, 
\begin{align*}
\sR_{1_{\D}}(\sB)&=(R_{1}(\sP_{1}))\be_{1}+(R_{1}(\sP_{2}))\be_{2}\\
&=\left(\lim_{a_{1}\to 1}R_{a_{1}}(\sP_{1})\right)\be_{1}+\left(\lim_{a_{2}\to 1}R_{a_{2}}(\sP_{2})\right)\be_{2}\\
&=\lim_{\alpha\to 1_{\D}}(R_{a_{1}}(\sP_{1})\be_{1}+R_{a_{2}}(\sP_{2})\be_{2})\\
&=\lim_{\alpha\to 1_{\D}}\sR_{\alpha}(\sB).
\end{align*}
Then
\begin{align*}
&\lim_{\alpha\to 1_{\D}}\sR_{\alpha}(\sB)=\left(\lim_{a_{1}\to 1}R_{a_{1}}(\sP_{1})\right)\be_{1}+\left(\lim_{a_{2}\to 1}R_{a_{2}}(\sP_{2})\right)\be_{2}  \\
&=\left(-\sum_{s=1}^{N}p_{s,1}\log p_{s,1}\right)\be_{1}+\left(-\sum_{s=1}^{N}p_{s,2}\log p_{s,2}\right)\be_{2} \\
&=-\sum_{s=1}^{N}\rho_{s}\fLog_{\D}(\rho_{s}).
\end{align*}
\end{proof}
Based on \cite{PLDR}, we examine the following:
\begin{proposition}
If $\alpha \to 1_{\D}$ together with the Theorem \ref{Th:LHop_H}, applied to the hyperbolic extension of Rényi entropy of hyperbolic order $\alpha$ \eqref{Eq:RE_hyp_1}, the Shannon strong hyperbolic entropy \eqref{Eq:Str_HE} is obtained:
\end{proposition}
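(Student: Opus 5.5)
The plan is to write $\sR_{\alpha}(\sB)$ as a quotient $F(\alpha)/G(\alpha)$ of two hyperbolic-valued functions of the hyperbolic variable $\alpha=a_{1}\be_{1}+a_{2}\be_{2}$, where
\begin{equation*}
F(\alpha)=\fLog_{\D}\left(\sum_{s=1}^{N}\rho_{s}^{\alpha}\right)=\log\Big(\sum_{s=1}^{N}p_{s,1}^{a_{1}}\Big)\be_{1}+\log\Big(\sum_{s=1}^{N}p_{s,2}^{a_{2}}\Big)\be_{2},
\end{equation*}
and $G(\alpha)=1_{\D}-\alpha=(1-a_{1})\be_{1}+(1-a_{2})\be_{2}$. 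We restrict to the case $\sum_{s}\rho_{s}=1_{\D}$, so that both components are genuine probability vectors. In that case $\sum_{s}p_{s,j}^{1}=1$ for $j=1,2$, and therefore $F(\alpha)\to 0\be_{1}+0\be_{2}$ and $G(\alpha)\to 0\be_{1}+0\be_{2}$ as $\alpha\to 1_{\D}$. This places us in the $0/0$ setting required by Theorem \ref{Th:LHop_H} with $\xi_{0}=1_{\D}$.

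Next we compute the hyperbolic derivatives using \eqref{Eq:Hyp_der_par}, working component by component. This gives
\begin{equation*}
F'(\alpha)=\frac{\sum_{s}p_{s,1}^{a_{1}}\log p_{s,1}}{\sum_{s}p_{s,1}^{a_{1}}}\be_{1}+\frac{\sum_{s}p_{s,2}^{a_{2}}\log p_{s,2}}{\sum_{s}p_{s,2}^{a_{2}}}\be_{2},
\end{equation*}
together with $G'(\alpha)=-1\be_{1}-1\be_{2}=-1_{\D}$. In particular $G'(1_{\D})=-1_{\D}\notin\sG_{0}$, so the nondegeneracy hypothesis of Theorem \ref{Th:LHop_H} holds. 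The limit of $F'/G'$ exists componentwise: as $a_{j}\to 1$ the denominators tend to $1$, and the $j$-th component tends to $-\sum_{s}p_{s,j}\log p_{s,j}$.

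Applying Theorem \ref{Th:LHop_H} then yields
\begin{equation*}
\lim_{\alpha\to 1_{\D}}\sR_{\alpha}(\sB)=\Big(-\sum_{s}p_{s,1}\log p_{s,1}\Big)\be_{1}+\Big(-\sum_{s}p_{s,2}\log p_{s,2}\Big)\be_{2}.
\end{equation*}
Recombining the two components through the idempotent product, exactly as at the end of the proof of the rewriting proposition, this equals $-\sum_{s}\rho_{s}\fLog_{\D}\rho_{s}=S_{f}(\sB)$.

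The main obstacle is the treatment of zero probabilities and of the degenerate distributions of cases 2 and 3 of the definition. If some $p_{s,j}=0$, the term $\log p_{s,j}$ is undefined, so we adopt the convention $0\log 0=0$ and sum only over the support in each component. Since $p_{s,j}^{a_{j}}\log p_{s,j}\to 0$ when $p_{s,j}=0$ (the limit being taken with $a_{j}>0$), this convention is consistent with the derivative computation. In cases 2 and 3 one component sum equals $0$, so its logarithm is undefined. There we interpret $\fLog_{\D}$ only on the nonzero component and apply the real L'Hôpital rule to that component alone, noting explicitly that Theorem \ref{Th:LHop_H}, which requires $G'(\xi_{0})\notin\sG_{0}$, is then replaced by its one-component version.
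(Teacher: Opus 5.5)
Your proof is correct and follows the same route as the paper: you write $\sR_{\alpha}(\sB)$ as the quotient $\fLog_{\D}\bigl(\sum_{s}\rho_{s}^{\alpha}\bigr)/(1_{\D}-\alpha)$, differentiate componentwise via \eqref{Eq:Hyp_der_par}, apply Theorem \ref{Th:LHop_H}, and recombine the two real Shannon entropies into $S_{f}(\sB)$. Your explicit checks of the $0/0$ hypothesis and of $G'(1_{\D})=-1_{\D}\notin\sG_{0}$, and your remarks on zero probabilities and on the degenerate cases 2 and 3, are useful additions rather than a change of method; the paper tacitly assumes $\sum_{s}\rho_{s}=1_{\D}$ when it lets the denominators tend to $1$.
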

\begin{proof}
Indeed,
\begin{align*}
    \lim_{\alpha\to 1_{\D}}\sR_{\alpha}(\sB)&=\lim_{\alpha\to 1_{\D}}\frac{\left(\fLog_{\D} \left(\sum_{s=1}^{N}\rho_{s}^{\alpha}\right)\right)'}{(1_{\D}-\alpha)'}\\
    &=\lim_{\alpha\to 1_{\D}}\frac{\left(\log \left(\sum_{s=1}^{N}p_{s,1}^{a_1}\right)\be_{1}+\log \left(\sum_{s=1}^{N}p_{s,2}^{a_2}\right)\be_{2}\right)'}{((1\be_{1}+1\be_{2})-(a_{1}\be_{1}+a_{2}\be_{2}))'}\\
    &=\lim_{a_{1}\to 1}\frac{\left(\log \left(\sum_{s=1}^{N}p_{s,1}^{a_1}\right)\right)'}{(1-a_{1})'}\be_{1}+\lim_{a_{2}\to 1}\frac{\left(\log \left(\sum_{s=1}^{N}p_{s,2}^{a_2}\right)\right)'}{(1-a_{2})'}\be_{2}\\
    &=\lim_{a_{1}\to 1}-\frac{\sum_{s=1}^{N}p_{s,1}^{a_{1}}\log p_{s,1}}{\sum_{k=1}^{N}p_{s,1}^{a_1}}\be_{1}+\lim_{a_{2}\to 1}-\frac{\sum_{s=1}^{N}p_{s,2}^{a_{2}}\log p_{s,2}}{\sum_{k=1}^{N}p_{s,2}^{a_2}}\be_{2}\\
    &=\left(-\sum_{s=1}^{N}p_{s,1}\log p_{s,1}\right)\be_{1}+\left(-\sum_{s=1}^{N}p_{s,2}\log p_{s,2}\right)\be_{2}\\
    &=-\sum_{s=1}^{N}\rho_{s}\fLog_{\D}(\rho_{s})=S_{f}(\sB).
\end{align*}
\end{proof}
Now, some properties of the hyperbolic extension of Rényi entropy of hyperbolic order $\alpha$ \eqref{Eq:RE_hyp_1} are as follows:

\begin{enumerate}
    \item Non-negativity of $\sR_{\alpha}(\sB)$, i.e., $\sR_{\alpha}(\sB)\succeq 0_{\D}$.

Let $\alpha=a_{1}\be_{1}+a_{2}\be_{2}\in \D$ such that $\alpha\succ 1_{\D}$ and $0_{\D}\preceq\rho_{s}\preceq 1_{\D}$ such that $\rho_{s}=p_{s,1}\be_{1}+p_{s,2}\be_{2}$, then $a_{1}>1$, $0\leq p_{s,1}\leq1$ and $a_{2}>1$, $0\leq p_{s,2}\leq1$. Then
\begin{equation*}
    \sum_{s=1}^{N}p_{s,1}^{a_1} \leq \sum_{s=1}^{N}p_{s,1}=1, \quad \sum_{s=1}^{N}p_{s,2}^{a_2} \leq \sum_{s=1}^{N}p_{s,2}=1.
\end{equation*}
It follows that
\begin{align*}
&\left(\log\sum_{s=1}^{N}p_{s,1}^{a_1} \right)\be_{1}+\left(\log\sum_{s=1}^{N}p_{s,2}^{a_2}\right)\be_{2}\preceq \left(\log\sum_{s=1}^ {N}p_{s,1}\right)\be_{1}+\left(\log\sum_{s=1}^ {N}p_{s,2}\right)\be_{2}=1_{\D},
\end{align*}
Then
\begin{align*}
&\fLog_{\D}\left(\sum_{s=1}^{N}\rho_{s}^{\alpha}\right)\preceq \fLog_{\D}(1_{\D})=0_{\D}.
\end{align*}
Therefore, $\sR_{\alpha}(\sB)\succeq 0_{\D}$. On the other hand, consider $\alpha\prec 1_{\D}$, then $a_{1}\leq 1$ and $a_{2}\leq 1$.
\begin{align*}
    \sum_{s=1}^{N}\rho_{s}^{\alpha}&=\left(\sum_{s=1}^{N}p_{s,1}^{a_1}\right)\be_{1}+\left(\sum_{s=1}^{N}p_{s,2}^{a_2}\right)\be_{2}\\ &\succeq \left(\sum_{s=1}^{N}p_{s,1}\right)\be_{1}+\left(\sum_{s=1}^{N}p_{s,2}\right)\be_{2}=\sum_{s=1}^{N}\rho_{s}=1_{\D}.
\end{align*}
It must satisfy the inequality
\begin{equation*}
    \fLog_{\D}\left(\sum_{s=1}^{N} \rho_{s}^{\alpha}\right)\succeq \fLog_{\D}(1_{\D})=0_{\D}.
\end{equation*}
Therefore, $\sR_{\alpha}\succeq 0_{\D}$.

    \item Non-increasing, i.e., $\sR_{\alpha}(\sB)$ if $\alpha=a_{1}\be_{1}+a_{2}\be_{2}\in \D$ increase.

First, the derivative of $\sR_{\alpha}$ with respect to $\alpha$ is calculated, then
\begin{align*}
    &\left(\frac{1_{\D}}{1_{\D}-\alpha}\fLog_{\D} \left(\sum_{s=1}^{N}\rho_{s}^{\alpha}\right)\right)'=\left(\frac{\fLog_{\D} \left(\sum_{s=1}^{N}\rho_{s}^{\alpha}\right)}{1_{\D}-\alpha}\right)'\\
    &=\frac{1_{\D}}{(1_{\D}-\alpha)^{2}}\left((1_{\D}-\alpha)\left(\fLog_{\D}\sum_{s=1}^{N}\rho_{s}^{\alpha}\right)'-\fLog_{\D}\sum_{s=1}^{N}\rho_{s}^{\alpha}(1_{\D}-\alpha)'\right)\\
    &=\frac{1_{\D}}{(1_{\D}-\alpha)^{2}}\left((1_{\D}-\alpha)\left(\frac{\sum_{s=1}^{N}\rho_{s}^{\alpha}\fLog_{\D}\rho_{s}}{\sum_{k=1}^{N}\rho_{k}^{\alpha}}\right)-\left(-\fLog_{\D}\sum_{s=1}^{N}\rho_{s}^{\alpha}\right)\right)\\
    &=\frac{1_{\D}}{(1_{\D}-\alpha)}\sum_{s=1}^{N}\frac{\rho_{s}^{\alpha}}{\sum_{k=1}^{N}\rho_{k}^{\alpha}}\fLog_{\D}\rho_{s}-\frac{1_{\D}}{(1_{\D}-\alpha)^{2}}\fLog_{\D}\frac{1_{\D}}{\sum_{k=1}^{N}\rho_{k}^{\alpha}}\\
    &=\frac{1_{\D}}{(1_{\D}-\alpha)}\sum_{s=1}^{N}\frac{\rho_{s}^{\alpha}}{\sum_{k=1}^{N}\rho_{k}^{\alpha}}\fLog_{\D}\rho_{s}-\frac{1_{\D}}{(1_{\D}-\alpha)^{2}}\fLog_{\D}\frac{\sum_{s=1}^{N}\rho_{s}}{\sum_{k=1}^{N}\rho_{k}^{\alpha}}\\
    &=\frac{1_{\D}}{(1_{\D}-\alpha)}\sum_{s=1}^{N}\frac{\rho_{s}^{\alpha}}{\sum_{k=1}^{N}\rho_{k}^{\alpha}}\fLog_{\D}\rho_{s}-\frac{1_{\D}}{(1_{\D}-\alpha)^{2}}\fLog_{\D}\frac{\sum_{s=1}^{N}\rho_{s}^{\alpha}\rho_{s}^{1_{\D}-\alpha}}{\sum_{k=1}^{N}\rho_{k}^{\alpha}}\\
    &=\frac{1_{\D}}{(1_{\D}-\alpha)^2}\left(\sum_{s=1}^{N}\frac{\rho_{s}^{\alpha}}{\sum_{k=1}^{N}\rho_{k}^{\alpha}}\fLog_{\D}\rho_{s}^{1_{\D}-\alpha}-\fLog_{\D}\frac{\sum_{s=1}^{N}\rho_{s}^{\alpha}\rho_{s}^{1_{\D}-\alpha}}{\sum_{k=1}^{N}\rho_{k}^{\alpha}}\right)
.\end{align*}
The above equality in idempotent representation is
\begin{align*}
    &=\left(\frac{1}{(1-a_{1})^{2}}\left(\sum_{s=1}^{N}\frac{p_{s,1}^{a_{1}}}{\sum_{k=1}^{N}p_{s,1}^{a_{1}}}\log p_{s,1}^{1-a_{1}}-\log\frac{\sum_{s=1}^{N}p_{s,1}^{a_{1}}p_{s,1}^{1-a_{1}}}{\sum_{k=1}^{N}p_{s,1}^{a_{1}}}\right)\right)\be_{1}\\
    &+\left(\frac{1}{(1-a_{2})^{2}}\left(\sum_{s=1}^{N}\frac{p_{s,2}^{a_{2}}}{\sum_{k=1}^{N}p_{s,2}^{a_{2}}}\log p_{s,2}^{1-a_{2}}-\log\frac{\sum_{s=1}^{N}p_{s,2}^{a_{1}}p_{s,2}^{1-a_{2}}}{\sum_{k=1}^{N}p_{s,2}^{a_{2}}}\right)\right)\be_{2}.
\end{align*}
Since every logarithm is a concave function and from Jensen's inequality of real numbers component-by-component, it can be observed that
\begin{align*}
    &\left(\sum_{s=1}^{N}\frac{p_{s,1}^{a_1}}{\sum_{k=1}^{N}p_{k,1}^{a_1}}\log p_{s,1}^{1-a_{1}}-\log\frac{\sum_{s=1}^{N}p_{s,1}^{a_1}p_{s,1}^{1-a_{1}}}{\sum_{k=1}^{N}p_{k,1}^{a_1}}\right)\be_{1}< 0\be_{1},\\
    &\left(\sum_{s=1}^{N}\frac{p_{s,2}^{a_2}}{\sum_{k=1}^{N}p_{k,2}^{a_2}}\log p_{s,2}^{1-a_{2}}-\log\frac{\sum_{s=1}^{N}p_{s,2}^{a_2}p_{s,2}^{1-a_{2}}}{\sum_{k=1}^{N}p_{k,2}^{a_2}}\right)\be_{2}< 0\be_{2}.
\end{align*}
That is, the Rényi entropy $R_{a_{1}}(\sP_{1})$ decreases if $a_{1}$ increases and $R_{a_{2}}(\sP_{2})$ decreases if $a_{2}$ increases, in other words, $\sR_{\alpha}(\sB)$ decreases if $\alpha=a_{1}\be_{1}+a_{2}\be_{2}\in \D$ increases.

    \item Concavity of $\sR_{\alpha}(\sB)$, i.e., if only if  $0_{\D}\prec\alpha\preceq1_{\D}$, $\sR_{\alpha}$ it is a concave hyperbolic function.

Let $\alpha=a_{1}\be_{1}+a_{2}\be_{2}\in \D$ such that $0_{\D}\prec\alpha\prec1_{\D}$. Then $0<a_{1}<1$ and $0<a_{2}<1$. It is true that for $s=1,\ldots,N$, thus
\begin{align*}
    \fLog_{\D}(\rho_{s})&=\log p_{s,1}\be_{1}+\log p_{s,2}\be_{2},\\ \rho_{s}^{\alpha}&=p_{s,1}^{a_1}\be_{1}+p_{s,2}^{a_2}\be_{2},
\end{align*}
where each $\log p_{s,1},\log p_{s,2}$ y $p_{s,1}^{a_1},p_{s,2}^{a_2}$ they are concave functions, then $\fLog_{\D}(\rho_{s})$ and $\rho_{s}^{\alpha}$ these are concave hyperbolic functions. It is known that,
\begin{align*}
    \sR_{\alpha}(\sB)&=\frac{1_{\D}}{1_{\D}-\alpha}\fLog_{\D}\sum_{s=1}^{N}\rho_{s}^{\alpha}\\ &=\left(\frac{1}{1-a_{1}}\log\sum_{s=1}^{N}(p_{s,1}^{a_{1}})\right)\be_{1}+\left(\frac{1}{1-a_{2}}\log\sum_{s=1}^{N}(p_{s,2}^{a_{2}})\right)\be_{2}.
\end{align*}
Hence, $\sR_{\alpha}(\sB)$ is a concave hyperbolic function. On the other hand, as stated earlier, the following holds
\begin{equation*}
    \displaystyle\lim_{\alpha\to 1_{\D}}\sR_{\alpha}(\sB)= -\sum_{s=1}^{N}\rho_{s}\fLog_{\D}\rho_{s},
\end{equation*}
Therefore, we also know that it is a concave hyperbolic function. Thus, $\sR_{1_{\D}}(\sB)$ is a concave hyperbolic function.

Let $\alpha=a_{1}\be_{1}+a_{2}\be_{2}\in \D$ such that $\alpha\succeq 1_{\D}$, then $a_{1}\geq 1$ and $a_{2}\geq 1$. Since $\fLog_{\D}(\rho_{s})$ it is a concave hyperbolic function because $\log p_{s,1}, \log p_{s,2}$, $s=1,\ldots, N$, they are concave functions. But now $\rho_{s}^{\alpha}$, $s=1,\ldots, N$ it is a convex hyperbolic function, since $p_{s,1}^{a_{1}},p_{s,2}^{a_{2}}$, $s=1,\ldots, N$, they are convex functions. Therefore $\sR_{\alpha}(\sB)$ it is neither a convex hyperbolic function nor a concave hyperbolic function. Finally, for $0<a_{1}\leq 1$ and $0<a_{2}\leq 1$, i.e., $0_{\D}\prec \alpha \preceq 1_{\D}$ it has to $\sR_{\alpha}(\sB)$ it is a concave hyperbolic function.
\end{enumerate}
As a consequence of the hyperbolic extension of Rényi entropy of hyperbolic order $\alpha$ \eqref{Eq:RE_hyp_1}, the following special cases are stated:
\begin{definition}[]
Let $\alpha=0\be_{1}+0\be_{2} \in \D$ and let $\sB=(\rho_{1}, \rho_{2}, \ldots, \rho_{N})$ be a hyperbolic probability distribution. The hyperbolic extension of Hartley entropy of hyperbolic order $\alpha=0_{\D}$, denoted by $\sR_{0_{\D}}$, is defined as
\begin{align*}
    \sR_{0_{\D}}(\sB):&=\frac{1_{\D}}{1_{\D}-0_{\D}}\fLog_{\D} \left(\sum_{s=1}^{N}\rho_{s}^{0_{\D}}\right)\\ &=\left(\frac{1}{1-0}\log\sum_{s=1}^{N}p_{s,1}^{0}\right)\be_{1}+\left(\frac{1}{1-0}\log\sum_{s=1}^{N}p_{s,2}^{0}\right)\be_{2}\\ &=R_{0}(\sP_{1})\be_{1}+R_{0}(\sP_{2})\be_{2},
\end{align*}
where the respective components are analogous to Hartley entropy \eqref{Eq:E_Hart}.
\end{definition}

\begin{definition}[]
Let $\alpha=2\be_{1}+2\be_{2} \in \D$ and let $\sB=(\rho_{1}, \rho_{2}, \ldots, \rho_{N})$ be a hyperbolic probability distribution. The hyperbolic extension of Collision entropy of hyperbolic order $\alpha=2_{\D}$, denoted by $\sR_{2_{\D}}$, is defined as
\begin{align*}
    \sR_{2_{\D}}(\sB):&=\frac{1_{\D}}{1_{\D}-2_{\D}}\fLog_{\D} \left(\sum_{s=1}^{N}\rho_{s}^{2_{\D}}\right)\\ &=\left(\frac{1}{1-2}\log\sum_{s=1}^{N}p_{s,1}^{2}\right)\be_{1}+\left(\frac{1}{1-2}\log\sum_{s=1}^{N}p_{s,2}^{2}\right)\be_{2}\\ &=R_{2}(\sP_{1})\be_{1}+R_{2}(\sP_{2})\be_{2},
\end{align*}
where the respective components are analogous to Collision entropy \eqref{Eq:E_Col}.
\end{definition}

\section{Final comments}
Finally, a first approach is introduced to two definitions that may mark the future development of this work.

\subsubsection*{The strong hyperbolic extension of Shannon extropy}
\begin{definition}[]
Let $\sB=(\rho_{1}, \rho_{2}, \ldots, \rho_{N})$ be a hyperbolic probability distribution. The strong hyperbolic extension of Shannon extropy associated with $\sB$, is defined as
\begin{equation*}
    J_{f}(\sB)=-\sum_{s=1}^{N}(1_{\D}-\rho_{s})\fLog_{\D}(1_{\D}-\rho_{s}).
\end{equation*}
\end{definition} 
In analogy to Proposition \ref{Prop:Relacion ExtyEnt}, the relation between Shannon strong hyperbolic entropy and the strong hyperbolic extension of Shannon extropy is presented.

If $N=2$, then $S_{f}(\sB)=J_{f}(\sB)$ holds. Indeed,
\begin{align*}
    S_{f}(\sB)&=\sum_{s=1}^{2}-\rho_{s}\fLog_{\D}\rho_{s}=-\rho_{1}\fLog_{\D}\rho_{1}-\rho_{2}\fLog_{\D}\rho_{2} \\ &= -(1_{\D}-\rho_{2})\fLog_{\D}(1_{\D}-\rho_{2})-(1_{\D}-\rho_{1})\fLog_{\D}(1_{\D}-\rho_{1})\\ 
    &=-\sum_{s=1}^{2}(1_{\D}-\rho_{s})\fLog_{\D}(1_{\D}-\rho_{s})=J_{f}(\sB).
\end{align*}
Let $N\geq 3$, that is, $\sB$ contains 3 or more positive hyperbolic components. Clearly, 
\begin{align*}
    S_{f}(\sB)=S(\sP_{1})\be_{1}+S(\sP_{2})\be_{2}
		\succeq J(\sP_{1})\be_{1}+J(\sP_{2})\be_{2}=J_{f}(\sB).
\end{align*}
which follows directly from using the Proposition \ref{Prop:Relacion ExtyEnt} component-by-component.

\subsubsection*{The strong hyperbolic extension of Rényi extropy with hyperbolic order $\alpha$}
\begin{definition}[]
Let $\alpha=a_{1}\be_{1}+a_{2}\be_{2} \in \D$ such that $1_{\D}-\alpha \not\in\sG_{0}$ and let $\sB=(\rho_{1}, \rho_{2}, \ldots, \rho_{N})$ be a hyperbolic probability distribution. The strong hyperbolic extension of Rényi extropy with hyperbolic order $\alpha$, denoted by $\sJ_{\alpha}$, can be introduced in analogy with \cite{LX} as:
\begin{align*}
\sJ_{\alpha}(\sB):&=\frac{1_{\D}}{1_{\D}-\alpha}[-(N-1_{\D})\fLog_{\D}(N-1_{\D}) +(N-1_{\D})\fLog_{\D}\left(\sum_{s=1}^{N}(1_{\D}-\rho_{s})^{\alpha}\right)].
\end{align*}
\end{definition}

\section*{Statements}
\subsection*{Financial Support} This work was partially supported by Instituto Polit\'ecnico Nacional  within the framework of the support programs of the Postgraduate Studies and Research Section.
\subsection*{Conflict of Interest} The authors declare that they have no conflict of interest related to this manuscript.
\subsection*{Authors Contributions} All authors made substantial contributions to the conception and design of the study. All participated in the drafting of the manuscript or its critical revision.

\subsection*{ORCID}
\noindent
Juan Adrián Ramírez-Belman: https://orcid.org/0009-0008-0873-8057 \\
Juan Bory-Reyes: https://orcid.org/0000-0002-7004-1794\\
Jos\'e Oscar Gonz\'alez-Cervantes: https://orcid.org/0000-0003-4835-5436\\
Gamaliel Yafte Tellez-Sanchez: https://orcid.org/0000-0002-6387-5336

\end{document}